\documentclass[a4paper, 12pt]{article}
\usepackage{t1enc}
\usepackage[latin1]{inputenc}
\usepackage[english]{babel}
\usepackage{amssymb}
\usepackage{amsmath}
\usepackage{graphics}
\usepackage{blkarray}
\usepackage{tikz}

\vfuzz2pt 
\hfuzz2pt 

\usepackage{amsfonts}
\usepackage{epsfig,multicol}

\usepackage{hyperref}

\vfuzz2pt 
\hfuzz2pt 
\usepackage{amsthm}

\usepackage{mathrsfs}

\newtheorem{theorem}[subsubsection]{Theorem}

\newtheorem{lemma}[subsubsection]{Lemma}
\newtheorem{corollary}[subsubsection]{Corollary}
\newtheorem{definition}[subsubsection]{Definition}

\newtheorem{formula}[subsubsection]{}

\pagestyle{plain}
 

\newcommand{\arxiv}[1]{\href{http://arxiv.org/abs/#1}{\tt arXiv:\nolinkurl{#1}}}

\newcommand{\googlebooks}[1]{(preview at \href{http://books.google.com/books?id=#1}{google books})}

\def\<{\langle}
\def\>{\rangle}

\begin{document}

\def\hpic #1 #2 {\mbox{$\begin{array}[c]{l} \epsfig{file=#1,height=#2}
\end{array}$}}
 
\def\vpic #1 #2 {\mbox{$\begin{array}[c]{l} \epsfig{file=#1,width=#2}
\end{array}$}}

\title{Existence of the $AH+2$ subfactor}
\author{Pinhas Grossman
} \maketitle 
\begin{abstract}
We give two different proofs of the existence of the $AH+2$ subfactor, which is a $3$-supertransitive self-dual subfactor with index $\frac{9+\sqrt{17}}{2} $. The first proof is a direct construction using connections on graphs and intertwiner calculus for bimodule categories. The second proof is indirect, and deduces the existence of $AH+2$ from a recent alternative construction of the Asaeda-Haagerup subfactor and fusion combinatorics of the Brauer-Picard groupoid.
  \end{abstract}
\section{Introduction}
In \cite{MR1686551} Asaeda and Haagerup constructed two ``exotic'' subfactors, which were the first examples of 
subfactors not coming from groups or quantum groups in an apparent way. One is the Haagerup subfactor, 
with index $(5+\sqrt{13})/2 $, and the other is the Asaeda-Haagerup subfactor, with index
$(5+\sqrt{17})/2 $. The former has become increasingly well understood, with two 
new constructions developed
\cite{MR1832764, MR2679382}. Indeed, it is argued in  \cite{MR2837122}, based on analysis
of the Drinfeld center,
that the Haagerup subfactor should not be viewed as exotic at all, but rather as part of 
a conjectural infinite series of subfactors associated to finite cyclic groups of odd order 
(in which the Haagerup subfactor corresponds to $\mathbb{Z}/3\mathbb{Z} $).

The Asaeda-Haagerup subfactor (henceforth referred to as $AH$) has until recently appeared more opaque. In \cite{AHCat2}, 
a new construction of $AH$ was given by first constructing a new subfactor, which we call $2AH$, with index  twice that of $AH$. The subfactor $2AH$ is associated
to the group $\mathbb{Z}/4\mathbb{Z} \times \mathbb{Z}/2\mathbb{Z} $
in an analagous, though more complicated, manner as the Haagerup subfactor is associated to
$\mathbb{Z}/3\mathbb{Z}$. The existence of $AH$ is then deduced as a consequence
of the existence of $2AH$. This construction allowed for the solution of several
open problems regarding $AH$, notably the description of its Drinfeld center. An anlysis
of the modular data of $AH$ suggests a possible series of subfactors associated
to the groups $\mathbb{Z}/4n \mathbb{Z} \times \mathbb{Z}/2\mathbb{Z} $, of which $2AH$
is the first member (see \cite{1501.07679}).

The motivation for constructing $2AH$ came from an analysis of the Brauer-Picard groupoid
of $AH$, undertaken in \cite{GSbp}. The Brauer-Picard 
groupoid consists of all of the fusion categories in the Morita equivalence class of the even 
parts of $AH$ and all Morita equivalences between them. The input of the analysis
was the subfactor $AH$ along with two additional small-index subfactors, called $AH+1$ and $AH+2$ (with indices
$1$ and $2$ larger that that of $AH$, respectively) whose even parts belong to the 
same Morita equivalence class as those of $AH$. Starting with these three subfactors, which each 
give a Morita equivalence between two fusion categories, the groupoid was built up using essentially combinatorial 
methods. In the end, a gap in the very intricate groupoid structure pointed to the probable
existence of $2AH$, and led to the new construction of $AH$ and the results of \cite{AHCat2}.

The existence of $AH+1$ and $AH+2$ was in turn motivated by the study of quadrilaterals 
of subfactors \cite{MR1338739,MR2257402,MR2418197}. A quadrilateral of subfactors
is a square of subfactor inclusions $\begin{array}{ccc}
P&\subset &M \cr
\cup& &\cup \cr
N&\subset &Q
\end{array}$ such that $P$ and $Q$ generate $M$ and intersect in $N$. In \cite{MR2418197}, a quadrilateral was constructed whose upper 
inclusions $P \subset M$ and $Q \subset M $ are both the Haagerup subfactor,
and whose lower inclusions $N \subset P $ and $N \subset Q $ both have index one larger; 
the Galois group of $N \subset M$ is  $\mathbb{Z}/3\mathbb{Z}$. 
Somewhat surprisingly, the principal graph of the Asaeda-Haagerup subfactor appeared naturally
in the classification of similar quadrilaterals with  Galois group $\mathbb{Z}/2\mathbb{Z}$.
This suggested that there should be a quadrilateral whose upper inclusions are each $AH$ 
and whose lower inclusions have index one larger.

The subfactor $AH+1$ was constructed in \cite{MR2812458} 
by showing the existence of a certain algebra in one of the even parts of $AH$. Verifying the 
existence of this algebra involved computing several complicated intertwiner diagrams
in the bimodule category associated to $AH$. These computations were performed
using the generalized open string bimodule formalism developed in \cite{MR1686551}. They also used some data from a complicated gauge transformation calculation that was the main step in the construction of $AH$ in \cite{MR1686551}. 

Once it had been constructed, it became clear that the $AH+1$ subfactor exhibited
similar symmetries to those of $AH$, and it was conjectured that there should 
another quadrilateral whose upper inclusions are each $AH+1$ 
and whose lower inclusions have index one larger.

In this paper, an earlier version of which appeared as an online appendix to \cite{GSbp}, we construct the $AH+2$ subfactor. The basic method is similar to the construction of $AH+1$. We construct an algebra in one of the even parts of $AH+1$ by evaluating certain intertwiner diagrams. But just as evaluating these diagrams for $AH+1$ required data from the calculation in the original construction of $AH$, to construct $AH+2$ we first need to perform an analogue of Asaeda and Haagerup's calculation for the $AH+1$ subfactor. 

This calculation took up about 25 pages in \cite{MR1686551}, and the version we need is more difficult since $AH+1$ is more complicated than $AH$. We spare the reader most of the gory details, but include gauge transformation matrices in an appendix. The correctness of the gauge transformation data is verified in an accompanying Mathematica notebook. One subtlety which appears in the $AH+1$ case but did not appear in the $AH$ case is a nontrivial sign occuring in the connection of a certain period two automorphism.

We also include a second, completely different, proof of the existence of both $AH+1$ and $AH+2$. This proof is indirect and uses only the existence of $2AH $ and $AH$, the outer automorphisms of the principal even part of $2AH$, and fusion combinatorics of the Brauer-Picard groupoid. The existence of $AH$ was already deduced from the existence of $2AH$ in \cite{AHCat2}. That proof used a recognition theorem from \cite{GSbp}, in which a $4$-supertransitive subfactor can be shown to exist simply by finding a fusion category with the same fusion rules as its even part. This approach does not work for $AH+1$ and $AH+2$, since these subfactors are only $3$-supertransitive.

However, the presence of outer automorphisms of the principal even part of $2AH$ implies that the Brauer-Picard group of $AH$ has a rich structure, and the existence of $AH+1$ and $AH+2$ can be deduced using similar combinatorial methods to those in \cite{GSbp}. The success of these methods in constructing first $AH$, and now $AH+1$ and $AH+2$ as well, without any connection or intertwiner calculations at all, simply from the existence of $2AH$ and its outer automorphisms, is a reflection of the remarkable combinatorial structure of the Brauer-Picard groupoid. 

The subfactor $AH+2$ which we construct here has a number of pleasant properties. It is $3$-supertransitive, self-dual, and the odd and even part together form a $\mathbb{Z}/2\mathbb{Z} $-graded fusion category \cite{MR3354332}. There is an irreducible noncommuting quadrilateral of subfactors whose upper sides are both $AH+1$ and whose lower sides are both $AH+2$.

The paper is organized as follows.

In Section 2 we review some preliminary notions regarding subfactors, fusion categories, connections, diagrammatic calculus, and the Brauer-Picard groupoid.

In Section 3 we review some facts about the $AH$ and $AH+1$ subfactors and their constructions.

In Section 4 we construct the $AH+2$ subfactor by showing the existence of a certain algebra in one of the even parts of the $AH+1$ subfactor.

In Section 5 we give an alternative proof of the existence of both $AH+1$ and $AH+2$ from the existence of $2AH$ and combinatorics of the Brauer-Picard groupoid.

In Appendix A we give the data of a certain gauge transformation between bimodules in the bimodule category associated to $AH+1$; this data is used in Section 3 to check diagrammatic algebra relations in establishing the existence of $AH+2$.

There are two supplementary files included in the arXiv submission of this paper. The Mathematica notebook 
\textit{ahp2\_gauge.nb} 
verifies some connection calculations from Section 4 and the correctness of the gauge transformation data given in Appendix A. This Mathematica notebook is also in the arXiv submission of \cite{GSbp}. The text file 
\textit{AH1-AH4\_Bimodules} 
lists the fusion bimodules between the fusion rings $AH_1$ and $AH_4$, which are the Grothendieck rings of even parts of the subfactors $AH$ and $2AH$, respectively; it also gives their multiplicative compatiblity. This complements the text files in the arXiv submission of \cite{GSbp} which give analogous data for the $AH_i -AH_j$ fusion bimodules for $1 \leq i,j \leq 3 $. The $AH_1 -AH_4$ bimodules are used in Section 5.

\textbf{Acknowledgements.}
This paper grew out of an online appendix to \cite{GSbp}, which was joint work with Noah Snyder. In particular, the idea for the second proof of existence of $AH+1$ and $AH+2$ arose in conversations with Noah Snyder and uses the methods of \cite{GSbp} and the results of \cite{AHCat2}. We would like to thank Marta Asaeda for help in computing the connection on $AH+1$. We would like to thank Scott Morrison for initially pointing out to us that the dual graph of $AH+2$ must be the same as the principal graph. This work was partially supported by ARC grant DP140100732.

\section{Preliminaries}

\subsection{Subfactors and tensor categories}
A subfactor is a unital inclusion $N \subseteq M  $ of II$_1 $ factors. The subfactor has finite-index if the commutant $N'$ in the standard representation of $N \subseteq M$ on $L^2(M)$ is a finite von Neumann algebra, and the index is then defined as the Murray-von Neumann coupling constant of $N$ in this representation \cite{MR696688}.

The principal even part $\mathcal{N} $ of a finite-index subfactor $N \subseteq M $ is the category of $N-N$ bimodules tensor generated by $${}_N M {}_N \cong {}_N M {}_M \otimes_M {}_M M {}_N $$ and the dual even part is the category of $M-M$ bimodules tensor generated by ${}_M M {}_N \otimes_N {}_N M {}_M$. The categories $\mathcal{N} $ and $\mathcal{M} $ are C$^*$-tensor categories. The subfactor $ N \subseteq M$ is said to have finite depth if $\mathcal{N} $ and $\mathcal{M} $ have finitely many simple objects, up to isomorphism; in this case they are fusion categories.
The odd part of the subfactor is the category $\mathcal{K} $ of $N-M $ bimodules which is generated by tensoring objects of $\mathcal{N}$ with ${}_N M {}_M $; $\mathcal{K} $ is an $\mathcal{N}-\mathcal{M}$ bimodule category. Together, $\mathcal{N} $, $\mathcal{K} $, and $\mathcal{M} $ form a $2$-category whose $1$-morphisms have duals.

The principal graph of a finite-index subfactor $N \subseteq M $ is the bipartite graph with even vertices indexed by simple objects of $\mathcal{N} $ and odd vertices indexed by simple objects in $ \mathcal{K}$, with the number of edges between an even vertex ${}_N X {}_N $ and an odd vertex ${}_N Y {}_M $ given by $$dim(\text{Hom} ({}_N X {}_N \otimes_N {}_N M {}_M,{}_N Y {}_M) ) .$$ The dual graph is defined analogously, using $\mathcal{M} $ instead of $\mathcal{N} $. For a finite depth subfactor, the norm of the principal graph is the square root of the index.

\begin{definition}
An algebra in a monoidal category is an object $A$ together with maps $1 \rightarrow A $ (unit) and $A \otimes A \rightarrow A$ (multiplication) satisfying the usual associativity and identity relations. An algebra in a C$^*$-tensor category is called a Q-system if the unit is a scalar multiple of an isometry and multiplication is a scalar multiple of a co-isometry. A Q-system $A$ is said to be irreducible if $dim(\text{Hom}(1,A))=1 $.
\end{definition}

If $N \subseteq M$ is a finite-index subfactor, then ${}_N M {}_N $ has the structure of a Q-system in $\mathcal{N} $. Conversely, given an irreducible Q-system $A$ in a C$^*$-tensor category with simple identity object, there is a finite-index subfactor $N \subseteq M $ whose prinicipal even part $\mathcal{N} $ is equivalent to the tensor category generated by $A$ \cite{MR1257245}.

In a C$^*$-tensor category with simple identity object, there is a notion of dimension of objects, which is positive for nonzero objects, multiplicative in tensor products, and additive in direct sums. The dimension of an irreducble Q-system is the index of the corresponding subfactor \cite{MR1444286}. 

\subsection{Connections and bimodules}
The theory of paragroups and connections on graphs is due to Ocneanu.
A $4$-graph is a square of bipartite finite graphs
$\mathcal{G}_i, \ i \in \mathbb{Z}_4 $ on vertex sets $V_i, \ i \in \mathbb{Z}_4 $, as in Figure \ref{4g}.
A biunitary connection $\alpha $ consists of a $4$-graph and a function assigning complex numbers to cells, which are loops around the square.

\def\gb{{\beta^2}}
\def\gbf{{\beta^4}}
\def\gbs{{\beta}}
\def\os{{\rm OpString}}
\def\cho{\choose}
\def\ovl{\overline}
\def\sk{{*_{\cal K}}}
\def\sl{{*_{\cal L}}}
\def\sg{{*_{\cal G}}}
\def\lan{\langle}
\def\ran{\rangle}
\def\oX{{\ovl X}}
\def\begeq{\begin{eqnarray*}}
\def\endeq{\end{eqnarray*}}
\def\pha{\phantom}
\def\la{\lambda}
\def\lad{\lambda^2}
\def\R{R}
\def\Y{Y}
\def\Z{Z}
\def\sm{\small}
\def\s#1{$#1_{\sigma}$}
\def\ss#1{$#1_{\sigma^2}$}
\def\si#1{#1_{\sigma}}
\def\ssi#1{#1_{\sigma^2}}
\def\a{\alpha}
\def\ab{\tilde{\alpha}}
\def\dline(#1,#2){\multiput(#1,#2)(0,-3){8}{\line(0,-1){2}}}
\def\hl{\hline}
\def\mcol{\multicolumn}
\def\td#1{\tilde #1}
\def\bl{$\bullet$}
\begin{figure}[h] 
\begin{center}
\thinlines
\unitlength 1.0mm
\begin{picture}(30,20)(0,-5)
\multiput(11,-2)(0,10){2}{\line(1,0){8}}
\multiput(10,7)(10,0){2}{\line(0,-1){8}}
\multiput(10,-2)(10,0){2}{\circle*{1}}
\multiput(10,8)(10,0){2}{\circle*{1}}
\put(6,12){\makebox(0,0){$V_0$}}
\put(24,12){\makebox(0,0){$V_1$}}
\put(6,-6){\makebox(0,0){$V_3$}}
\put(24,-6){\makebox(0,0){$V_2$}}
\put(15,12){\makebox(0,0){${\cal G}_0$}}
\put(15,-6){\makebox(0,0){${\cal G}_2$}}
\put(6,3){\makebox(0,0){${\cal G}_3$}}
\put(24,3){\makebox(0,0){${\cal G}_1$}}
\put(15,3){\makebox(0,0){$\a$}}
\end{picture}
\end{center}
\caption{Schematic representation of a connection; cells are loops around the square.  }
\label{4g}
\end{figure}
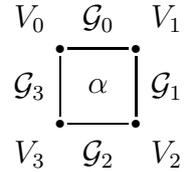

This function is required to satisfy several axioms: unitarity, initialization, harmonicity, and renormalization; see \cite{MR1642584}.

For   a finite depth subfactor $N \subseteq M $, one considers the following $4$-graph: the upper left vertices $V_0$ are the even vertices of the principal graph, the lower right vertices $V_2$ are the even vertices of the dual graph, and $V_1$ and $V_3$ are each the (common) odd vertices of the principal and dual graphs. The upper graph $\mathcal{G}_0 $ and the left graph $\mathcal{G}_3 $ are each the principal graph, with the even vertices of $\mathcal{G}_0 $ identified with the duals of the corresponding vertices of $\mathcal{G}_3 $, and the lower and right graphs $\mathcal{G}_1 $ and $\mathcal{G}_2 $ are each the dual graph, again with the even vertices identified according to duality. Then there is a biunitary connection on this $4$-graph associated to $N \subseteq M $ whose gauge equivalence class is a complete invariant for the subfactor.

In fact one can construct a subfactor from any biunitary connection on a $4$-graph (with $\mathcal{G}_0 $ and $\mathcal{G}_2 $ connected), but in general the connection of the resulting subfactor is different than the input connection. For a connection to come from a subfactor, an additional axiom called flatness is required to be satisfied. To construct a subfactor with a given pair of principal and dual graphs, one can try to write down a biunitary connection for the graphs and check for flatness. However verifying flatness is usually exceedingly difficult in practice, and Asaeda and Haagerup took a different approach to construct their subfactors.

We briefly summarize their theory of \textit{generalized open string bimodules}; for more details see \cite{MR1686551}.

Given a biunitary connection with $\mathcal{G}_0 $ and $\mathcal{G}_2 $ connected, one can associate II$_1 $ factors $N $ and $M$ to $\mathcal{G}_0 $ and $\mathcal{G}_2 $, respectively,  and an $N-M $ bimodule to the connection. There is a notion of direct sum of connections with the same horizontal graphs  $\mathcal{G}_0 $ and $\mathcal{G}_2 $, in which one takes disjoint unions of the vertical graphs. There is also a notion of product of  connections where the lower graph of the first connection is the same as the upper graph of the second connection, in which vertical edges are composed and the connection values multiplied accordingly. Finally there is an opposite connection with the upper and lower graphs reversed. These operations on connections correspond to the direct sum, relative tensor product, and contragedient operations on the corresponding bimodules over II$_1$ factors. Isomorphisms between bimodules correspond
to gauge transformations of the vertical graphs of the corresponding connections. We will often identify connections with their corresponding bimodules.

If $N \subseteq M $ is a finite depth hyperfinite subfactor with connection $\kappa $, then the bimodule ${}_{N^{\#}} X {}_{M^{\#}} $ associated to $\kappa $ gives a subfactor $N^{\#} \subseteq (M^{\#})' $ which is isomorphic to $ N \subseteq M$. Then by taking products of $\kappa$ and its opposite connection, and decomposing these products into irreducible summands, we get a concrete representation of the $2$-category of bimodules associated to $ N \subseteq M$, which allows us to perform calculations involving intertwiners.

\subsection{Diagrammatic calculus}

To perform calculations in the $2$-category coming from a subfactor, we use a standard diagrammatic calculus. Intertwiners are represented by vertices or boxes, with emanating edges labeled by the source and target objects. Following sector notation, we use Greek letters to label objects and often suppress tensor product symbols and ``Hom''. 
Thus for example the diagram

$$\hpic{rhosigeta} {0.9in}  $$ 

represents an intertwiner in $(\rho,\sigma\eta) $.

 Composition is represented by vertical concatenation of diagrams, and tensor product is represented by horizontal concatenation of diagrams. Straight strings represent identity morphisms, and diagrams are read from top to bottom. 
 
 If $\rho$ and $\bar{\rho} $ are contragredient bimodules, then there are scalar multiples of isometries  
  $$ \hpic{rrdual} {0.5in}  \in (1,\rho \bar{\rho}) , \quad \hpic{rrdualp} {0.6in}  \in (\bar{\rho} \rho,1) ,$$ called coevaluation and evaluation,
such that $$\hpic{wiggle} {1.2in}  =  \hpic{straight} {1.2in}, \text{ and }  \quad \hpic{wiggle2} {1.2in}  =  \hpic{straight2} {1.2in}   .$$
A self-dual bimodule $\rho \cong \bar{\rho} $ is called symmetrically self-dual (or real) if under the identification of $\rho  $ with $\bar{\rho} $, the 
evaluation and coevaluation maps are adjoints of each other.

We now consider the $2$-category coming from a biunitary connection on a $4$-graph with connected horizontal graphs as in Figure \ref{4g}. For a pair of connections with the same horizontal graphs, bimodule intertwiners are described by collections of maps on the vertical edge spaces. That is, if $\rho $ and $\sigma$ are two connections with the same horizontal graphs $\mathcal{G}_0 $ and $\mathcal{G}_2 $ and $u \in (\rho,\sigma )$  is an intertwiner, than for each pair of vertices $a \in V_0 $ and $b \in V_3 $, we have a map from the vector space with basis indexed by the edges connecting $a$ and $b$ in the left graph of $\rho $ to the vector space with edges indexed by the edges connecting $a$ and $b$ in the left graph of $\sigma $, and similarly for each pair of vertices $c \in V_1$ and $d \in V_3$ (for the right graphs). The collection of these linear maps for all pairs of vertices in $(V_0,V_3) $ and $(V_1,V_2) $  completely determines $u $, and composition of intertwiners is given by composition of the corresponding linear maps on the vertical edge spaces.

If $u \in (\rho,\sigma) $ is an intertwiner, $a \in V_0$ and $b \in V_3$ are vertices, and
$(ab)_i $ and $(ab)_j$ are edges connecting $a$ and $b$ in the left graphs of $\rho $ and $\sigma $, respectively, then we denote by $u((ab)_i,(ab)_j) $ the corresponding coefficient of the vertical edge space map associated to $u$. We can represent coefficients of intertwiners between tensor products
of bimodules by coloring the regions of the intertwiner diagrams with vertices of the $4$-graph and the strings of the diagram with edges (except that in all of the diagrams in this paper, there is a unique edge connecting each pair of vertices, so we omit the labeling of the edges).

 Thus for example the diagram
 
 $$\hpic{rhosigetalab} {0.9in}  $$ 
signifies the value of the coefficient of the intertwiner for the edge connecting $a$ and $b$ in a vertical graph of $\rho $ and the product of the edge connecting $a $ and $c$ in a vertical graph of $\sigma $ with the edge connecting $ c$ and $b$ in a vertical graph of $\eta $. To evalaute coefficients of more complicated intertwiner diagrams, we start by labeling the top and bottom of the diagram with the edges of the coefficient we want, and then we must sum over all \textit{states}, which are ways of filling in the diagram with consistent labeling. Each state is in turn evaluated by taking the products of the values that the labeling assigns to each vertex in the diagram. 

A key point is that in most of the computations below, the intertwiner we are looking at lives in a $1$-dimensional space, and is thus uniquely specified by a single nonzero coefficient. Thus we can identify relatively complicated intertwiners which are built out of numerous compositions and tensor products of smaller intertwiners simply by labeling the diagram by an appropriate state and evaluating the vertex coefficients of the diagram determined by that state. For examples of how this works, see \cite{MR2812458} or Lemma \ref{coflem} and Theorem \ref{mnthm} below.

\subsection{The Brauer-Picard groupoid}

To any finite depth subfactor $N \subseteq M $ we have associated a pair of fusion catgories $\mathcal{N} $ and $\mathcal{M} $ and a bimodule category ${}_{\mathcal{N}} \mathcal{K} {}_{\mathcal{M}}$ between them. The category ${}_{\mathcal{N}} \mathcal{K} {}_{\mathcal{M}}$ is invertible in the sense that  $${}_{\mathcal{N}} \mathcal{K} {}_{\mathcal{M}} \boxtimes_ {\mathcal{M}} {}_{\mathcal{M}} \mathcal{K}^{op} {}_{\mathcal{N}} \cong {}_{\mathcal{N}} \mathcal{N} {}_{\mathcal{N}}  ,$$
where $\mathcal{K} ^{op}$ is the opposite bimodule category, ${}_{\mathcal{N}} \mathcal{N} {}_{\mathcal{N}}$ is the trivial module category, and $\boxtimes_ {\mathcal{M}}$ is the relative tensor product of bimodule categories; and a similar identity holds for the product in the other order. An invertible bimodule category is also called a Morita equivalence.
\begin{definition} \cite{MR2677836}
The Brauer-Picard groupoid of a fusion category $ \mathcal{C}$ is the $3$-groupoid whose objects are fusion categories Morita equivalent to $\mathcal{C} $, whose $1$-morphisms are invertible bimodule categories between such fusion categories, whose $2$-morphisms are equivalences of such bimodule categories, and whose $3$ morphisms are isomorphisms of such equivalences. The Brauer-Picard group of $\mathcal{C} $ is the group of Morita autoequivalences of $\mathcal{C} $ modulo equivalence. 
\end{definition}
The Brauer-Picard group is an invariant of the Morita equivalence class, and contains as a subgroup the group of outer automorphisms of $\mathcal{C} $, which give bimodule categories by twisting the trivial bimodule category on one side by automorphisms.

An effective technique for performing calculations in the Brauer-Picard groupoid of a ``small'' fusion category using decategorified invariants was developed in \cite{GSbp}. We first compute the Grothendieck ring for each of the known fusion categories in the groupoid, then compute lists of based modules over each of these rings, and then look at how these different modules fit together into bimodules. Finally we look at how different bimodules can be composed, in the sense of being compatible with tensor products of bimodule categories. This combinatorial data provides strong constraints on the structure of the groupoid, and sometimes allows us to develop large structures from a very small amount of initial information. We refer the reader to \cite{GSbp} for details.

\section{$AH$, $AH+1$, and $AH+2$}
\subsection{The Asaeda-Haagerup subfactor}

In \cite{MR1686551}, Asaeda and Haagerup constructed a subfactor with index $\displaystyle \frac{5+\sqrt{17}}{2} $ and the graph pair in Figure \ref{ahgraphs}.

\begin{figure}
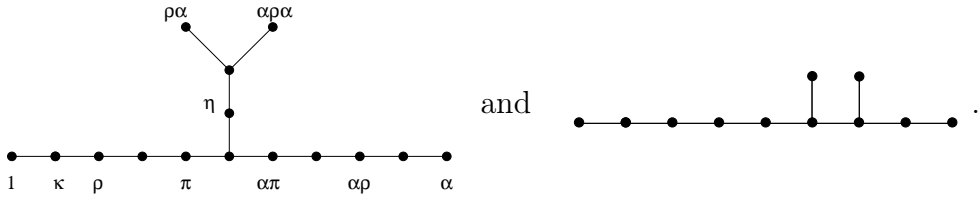

$$\hpic{AHpg_labeled} {1in} \text{ and } \hpic{AHdualpg} {0.3in} .$$ 
\caption{The graphs of the Asaeda-Haagerup subfactor}
\label{ahgraphs}
\end{figure}
Here we have labeled the even vertices on the principal graph, which correspond to the simple objects in the principal even part, and $\kappa$, which is the fundamental bimodule ${}_N M {}_M $. (Warning: we use different labels for the objects than in \cite{MR1686551}.)

They first computed the (unique) connection on the $4$-graph associated to this graph pair, which corresponds to the bimodule $\kappa $. Then instead of directly trying to verify flatness of this connection, they studied the $2$-category of bimodules generated by $\kappa $.

 They decomposed the product connection $\kappa \bar{\kappa} $ into a direct sum of the identity connection and another connection, which corresponds to $\rho $. 
 Note that while the upper graph of the connection $\kappa $ is 
 the principal graph and the lower graph is the dual graph, the upper and lower graphs of $\rho $ are both the principal graph, since $\rho $ is an $N-N$ bimodule.  
They then defined a connection $\alpha $ whose upper graph and lower graphs are both the principal graph, and whose vertical edges connect each vertex in the principal graph to its reflection in the vertical line through the vertex $\eta $ in Figure \ref{ahgraphs}. There is a unique connection on this $4$-graph, up to gauge equivalence, whose values are identically $1$. 
Finally, they showed that the product connections $\rho \alpha \kappa $ and $\alpha \rho \alpha \kappa $ give isomorphic bimodules. To prove this they explicitly calculated a vertical gauge transformation between these two product connections. This calculation is difficult and occupies 25 pages in their paper. From this isomorphism of bimodules, they deduced the existence of a subfactor with the given graph pair (and hence flatness of the connection on the original graph pair).

\subsection{AH+1}

In \cite{MR2812458} it was shown that with $ \kappa$ and $\alpha $ as above, there is a Q-system for $1+\bar{\kappa} \alpha \kappa $, giving a subfactor with index $1+dim(\bar{\kappa} \kappa)=\displaystyle \frac{7+\sqrt{17}}{2}$. Note that $1+\bar{\kappa} \alpha \kappa$ is an object in the dual even part of the Asaeda-Haagerup subfactor. We briefly recap the argument, since we will be using similar calculations to show existence of $AH+2$.

The following characterization of $Q$-systems for $2$-supertransitive subfactors is from
\cite{MR2418197}.
\begin{lemma} \label{plus1}
 Let $\sigma$ be a symmetrically self-dual simple object in a C$^*$-tensor category with simple unit and with $d=dim(\sigma) > 1$. Fix an isometry $\frac{1}{\sqrt{d}}\hpic{l3} {0.3in} $.
 Then $1+ \sigma $ admits a Q-system iff there
exists an isometry $\hpic{l8} {0.5in}  $  in $(\sigma,\sigma^2) $ such
that
\begin{enumerate}
\item $$\hpic{l1} {0.8in} = \hpic{l2} {0.8in} $$ 

\item $$\displaystyle \frac{1}{d-1} \left(\hpic{l5} {0.8in} - \hpic{l4} {0.8in} \right) =  \hpic{l7} {0.8in} - \hpic{l6} {0.8in} .$$

\end{enumerate}
\end{lemma}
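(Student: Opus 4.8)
The plan is to show that a Q-system structure on the object $A = 1 \oplus \sigma$ is equivalent to the datum of a single trivalent vertex $\sigma\otimes\sigma \to \sigma$, and then to read off conditions (1) and (2) as the two components of the associativity constraint for the target $1$ and the target $\sigma$ respectively. First I would pin down what a Q-system on $A$ amounts to. The unit $1 \to A$ must be (the stated scalar multiple of) the inclusion of the first summand, and the multiplication $m\colon A\otimes A \to A$ decomposes along $A\otimes A \cong 1 \oplus \sigma \oplus \sigma \oplus \sigma^2$. The components $1\otimes 1\to 1$, $1\otimes\sigma\to\sigma$ and $\sigma\otimes 1\to\sigma$ are forced by the unit axiom; the component $\sigma\otimes\sigma\to 1$ is a scalar multiple of the evaluation determined by the fixed (normalized) self-duality; and the only free datum is the component $\sigma\otimes\sigma\to\sigma$, an element of $(\sigma^2,\sigma)$. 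Using that $\sigma$ is symmetrically self-dual, I would dualize to identify this with an element of $(\sigma,\sigma^2)$, and use the co-isometry part of the Q-system normalization to rescale it to the adjoint $s^*$ of an isometry $s$; this $s$ is the isometry in the statement. Thus the whole content of the lemma is that associativity together with the co-isometry/positivity conditions on $m$ translate into (1) and (2) on $s$.

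Next I would impose associativity, $m(m\otimes 1) = m(1\otimes m)\colon A^{\otimes 3}\to A$, and project onto the summands of $A^{\otimes 3}$. Every constraint coming from a summand other than $\sigma^{\otimes 3}$ is already implied by the unit axiom, so the only nontrivial relations come from $\sigma^{\otimes 3}$, and these split according to whether the image lands in the $1$ or the $\sigma$ summand of $A$. For the target $1$, both sides factor as an evaluation applied after the trivalent vertex, and the equality says precisely that capping the output of $s^*$ against the remaining strand is symmetric in which pair of strands is multiplied first; this is condition (1). For the target $\sigma$, each side contributes a term in which two strands are capped by the evaluation and the unit reinstates the surviving strand, plus a term given by applying the trivalent vertex twice; matching the two sides yields an identity of the schematic form $s^*(s^*\otimes 1) - s^*(1\otimes s^*) = c\,(\mathrm{cap}_{12} - \mathrm{cap}_{23})$, which is condition (2) once the scalar $c$ is shown to equal $\tfrac{1}{d-1}$.

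The main obstacle, and the technical heart of the argument, is identifying that scalar as exactly $\tfrac{1}{d-1}$. I would determine it by closing the $\sigma$-component associativity relation against the coevaluation on two of the three legs and simplifying with three ingredients: the isometry relation $s^*s = 1_\sigma$, the bubble value (a closed $\sigma$-loop equals $d$), and condition (1) to rewrite the mixed terms. The resulting scalar equation solves to $c = \tfrac{1}{d-1}$, where the denominator $d-1$ arises from subtracting the Jones-projection (the $1$-summand) contribution from the full loop of value $d$; this is also where the hypothesis $d>1$ is used, so that $s$ actually appears in $\sigma^{\otimes 2}$ and the normalization is finite and positive.

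Finally, for the converse I would reverse the construction: given an isometry $s$ satisfying (1) and (2), I assemble $m$ from the unit maps, the fixed evaluation, and the adjoint $s^*$ taken with the normalization found above, then use (1) and (2) to verify associativity by reading the projection argument backwards. It then remains to check that the resulting associative algebra is genuinely a Q-system and not merely an algebra, i.e.\ that $m$ is a scalar multiple of a co-isometry with positive normalization; this follows from $s^*s = 1_\sigma$ together with the fact that the evaluation and coevaluation are mutually adjoint, which is exactly the symmetric self-duality hypothesis. I expect the forward direction's normalization computation to be the only genuinely delicate point, the rest being a careful but routine bookkeeping of the summands of $A^{\otimes 2}$ and $A^{\otimes 3}$.
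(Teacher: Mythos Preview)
The paper does not prove this lemma; it is simply quoted with the attribution ``The following characterization of $Q$-systems for $2$-supertransitive subfactors is from \cite{MR2418197},'' so there is no in-paper argument to compare against. Your sketch is the standard and correct route to the result: decompose $m\colon A\otimes A\to A$ along $A=1\oplus\sigma$, observe that the unit axiom and simplicity of $\sigma$ force every component except the pair $\sigma\otimes\sigma\to 1$ and $\sigma\otimes\sigma\to\sigma$, and then read off associativity on $\sigma^{\otimes 3}$ into the two target summands to get conditions (1) and (2).

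One point worth tightening: the scalar in front of the evaluation for the $\sigma\otimes\sigma\to 1$ component is not yet determined by the unit axiom alone, and it enters the coefficient in condition~(2). You should pin it down explicitly from the co-isometry equation $mm^*=\lambda\,\mathrm{id}_A$ restricted to the $1$-summand (this is where the fixed normalization $\tfrac{1}{\sqrt d}$ of the self-duality is used), and then the same equation restricted to the $\sigma$-summand fixes the normalization of the trivalent vertex so that $s$ is genuinely an isometry. With both normalizations in hand, your closing-off computation gives exactly $c=\tfrac{1}{d-1}$. The converse direction you outline is fine, and the symmetric self-duality hypothesis is indeed what makes the evaluation/coevaluation pair mutual adjoints and hence makes the assembled $m$ a co-isometry up to a positive scalar.
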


For $\sigma=\bar{\kappa}\alpha\kappa $, the intertwiner space $(\sigma,\sigma^2 )$ is $1$-dimensional, and is spanned by the diagram $$ \hpic{S} {1.0in} ,$$
where the trivalent vertices correspond to an embedding of $\rho $ in $\kappa \bar{\kappa} $ and the $6$-valent vertex corresponds to an nonzero intertwiner from $\rho\alpha \rho $ to $\alpha \rho \alpha $ (the space $(\rho\alpha \rho, \alpha \rho \alpha ) $ is also $1$-dimensional ). It is then shown that existence of the Q-system is equivalent to the following relations.

%
%
%
%

\begin{formula} \label{algebrarelations}
The Asaeda-Haagerup algebra relations:
\begin{enumerate}
\item $ \hpic{eq2lhsafter_3NN} {1in} = c Id_{\alpha \rho}$, $\hpic{eq2rhsafter_3NN} {1in} =c Id_{\rho \alpha}$

 \item $\hpic{eq1NNlhs} {0.88in} = \hpic{eq1NNrhs} {0.88in} $

\item  $\hpic{eq2lhsafter_1NN} {1.2in} = \hpic{eq2rhsafter_1NN} {1.2in} $

\item  $\hpic{Rlt1sym} {1in} = \hpic{Rrt1sym} {1in} $.

\end{enumerate}
(Where $c$ is a scalar).
\end{formula}

The intertwiners in the above relations are complicated, involving many compositions and tensor products. However in all but the last equation, the intertwiners live in $1$-dimensional spaces, and are therefore determined by a single scalar coefficient. These coefficients can be found by evaluating diagrams on specifc states. The states are evaluated by decomposing the diagrams into tensor products and compositions of intertwiners $\rho \rightarrow \rho^2 $ and $\alpha \rho \alpha \rightarrow \rho \alpha \rho $, which can in turn be expressed in terms of the more elementary intertwiners   $1 \rightarrow \kappa \bar{\kappa }$,  $1 \rightarrow \bar{\kappa} \kappa$, $\rho \rightarrow \kappa \bar{ \kappa}  $, and $\rho \alpha \kappa \rightarrow \alpha \rho \alpha \kappa $. These elementary intertwiners act on vertical edges in the $4$-graphs by explicit formulas given by gauge transformation matrices.  In particular, the calculation uses data from Asaeda and Haagerup's calculation of the gauge transformation between $\rho \alpha \kappa $ and $ \alpha \rho \alpha \kappa $ to establish the Q-system relations.

\section{AH+2}
\subsection{The construction}
The graph pair for the $AH+1$ subfactor is given in Figure \ref{ahp1graphs}, where once again we have labeled the even vertices in the principal graph (recycling some of the same letters as before). 
\begin{figure} [!h]
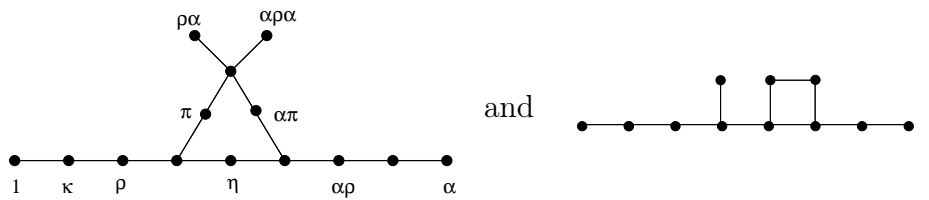

$$\hpic{AHp1_labeled} {1in} \text{ and } \hpic{AHp1dual} {0.3in}  .$$ 
\caption{The graphs for $AH+1$}
\label{ahp1graphs}
\end{figure}

The dual even part of $AH+1$ is the same as that of $AH$, but the principal even part is different - this can be seen by checking the Frobenius-Perron weights of the principal graph.

It was conjectured in \cite{MR2812458} that the construction of $AH+1$ can be iterated once more, and that there is again a Q-system for $1+\bar{\kappa}\alpha\kappa$, giving a subfactor with index $\displaystyle \frac{9+\sqrt{17}}{2} $. Once again the Q-system equations can be reduced to the relations $\ref{algebrarelations} $, but without a concrete realization of the $2$-category of bimodules for $AH+1 $, we have no way to evaluate the intertwiner diagrams. Therefore, we must first replicate Asaeda and Haagerup's $AH$ gauge transformation calculations for the $AH+1$ subfactor. This does not present theoretical difficulties but is somewhat more complicated than the original case.

\subsection{Connection for {AH+1}}
We are interested in the $4$-graph given in Figure \ref{4graphpics}, where we use a labeling and display similar to that used by \cite{MR1686551}. Note that in the figure we have ``unwrapped the square'', so reading from top to bottom, we have first the upper, then right, then lower, then finally left graphs. 
\begin{figure}
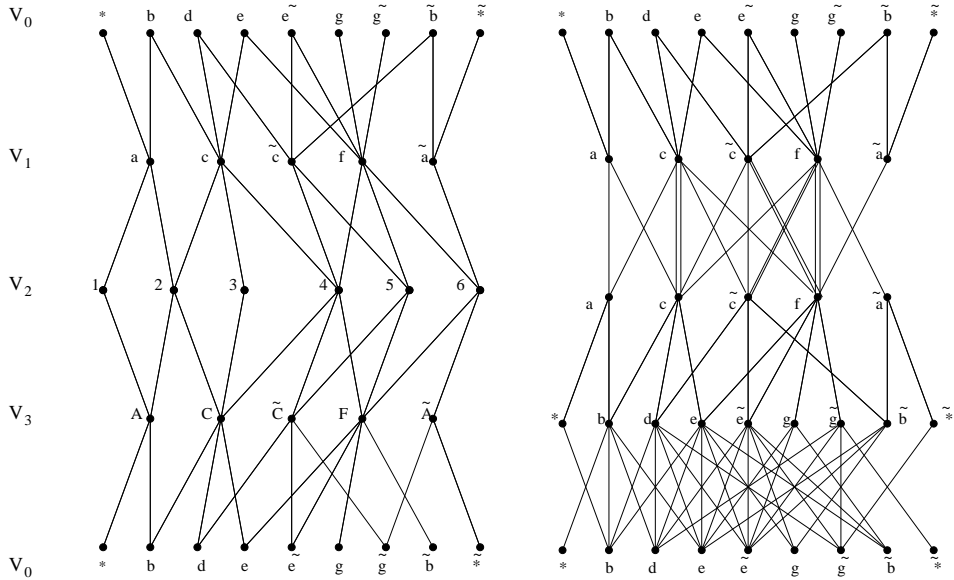

\label{4graphpics}
\centering
\hpic{kappa4g} {3 in}  \quad \hpic{rho4g} {3 in}

\caption{ The $4$-graphs for the connections of $\kappa$ (left) and $\rho $ (right) in $AH+1$}
\end{figure}

\begin{lemma}
There is a unique connection on the $4$-graph for $\kappa$ up to gauge choice, which may be taken to be real.
\end{lemma}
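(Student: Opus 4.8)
The plan is to solve the four connection axioms directly for the $4$-graph of Figure \ref{4graphpics}, keeping track of the residual gauge freedom. Because every pair of adjacent vertices in the principal and dual graphs of $AH+1$ is joined by a single edge, all edge multiplicity spaces are one-dimensional; a gauge transformation is then just a choice of phase on each edge of each of the four graphs, and it multiplies the value of a cell $(a,b,c,d)$ by the product of the four phases running around the loop. The goal is therefore to show that the biunitary cell values are uniquely determined once these phases are fixed, and that the fixing can be chosen so that every value is real.

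First I would pin down the \emph{absolute values} of the cell coefficients using harmonicity and renormalization. These two axioms express each renormalized connection value through the Frobenius--Perron dimensions of the four vertices of the cell, so once the Frobenius--Perron eigenvectors of the principal and dual graphs of $AH+1$ are computed, every modulus $|\alpha(a,b,c,d)|$ is determined and the problem reduces to fixing the \emph{phases} of the cell values modulo gauge. Next I would impose unitarity: for each pair $a \in V_0$, $c \in V_2$ the matrix of cell values with rows indexed by the intermediate vertices $b \in V_1$ and columns by $d \in V_3$ must be unitary. At a pair joined through a single intermediate vertex this matrix is $1 \times 1$ and imposes no constraint beyond the already-known modulus, so its phase can be absorbed into the gauge; the genuine constraints come only from the branch points of the graphs, where the matrix is $2 \times 2$ and unitarity forces its rows and columns to be orthonormal.

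The uniqueness statement would then follow by gauge-fixing and propagation. I would use the edge phases to normalize a spanning tree of cell values to their real positive moduli, and then read off the remaining values from the orthogonality relations at the $2 \times 2$ blocks; since the skeletons of the $AH+1$ graphs are tree-like with only a few branch points, this propagation determines every remaining phase with no freedom left, which gives uniqueness up to gauge. Reality follows because each $2 \times 2$ unitarity constraint, with its prescribed real positive moduli, is solvable by a real orthogonal matrix, so the gauge can be chosen to make all the cell values real simultaneously. The \textbf{main obstacle} is the global consistency of this propagation: one must check that the orthogonality relations at the several branch points close up coherently around every cycle of the graph, leaving no residual phase (equivalently, no nontrivial sign- or $U(1)$-valued monodromy) forced on the overlaps. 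This is precisely the point at which a genuine sign appears for the period-two automorphism $\alpha$ rather than for $\kappa$; for $\kappa$ the cycles close with a real solution, as confirmed by the computation in the accompanying notebook \textit{ahp2\_gauge.nb}.
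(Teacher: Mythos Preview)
The paper does not actually prove this lemma: it states the result and immediately exhibits the explicit connection, with verification relegated to the accompanying Mathematica notebook. So there is no argument in the paper to compare against, and your outline is in fact more than what the paper provides. Your approach is the standard one and is essentially sound.

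One imprecision is worth flagging. Harmonicity and renormalization alone do not determine the moduli of the cell values: harmonicity only asserts a common Perron--Frobenius eigenvalue for the four graphs, and renormalization is the statement that a certain FP-weighted transform of the connection is again unitary. It is the \emph{pair} of unitarity conditions---the original one and the renormalized one---that together force the moduli in each $2\times 2$ block; only after both are imposed do you get, for each such block, fixed absolute values to which your gauge-and-propagate argument applies. Reordering your first two steps (impose full biunitarity, extract moduli, then gauge away phases) fixes this.

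With that correction, your spanning-tree gauge fixing and propagation through the $2\times 2$ orthogonality relations is exactly the right strategy, and you have correctly located the only substantive point: whether the phase choices close up around cycles of the cell complex. For this $4$-graph the five $2\times 2$ blocks (at $b$--$2$, $d$--$4$, $e$--$4$, $\tilde e$--$4$, $\tilde e$--$5$) and the ambient tree-like structure do admit a consistent real solution, which is precisely what the explicit table in the paper records; a complete proof would carry out this cycle check explicitly rather than deferring it to the notebook.
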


We now give the connection for $\kappa$ using the following notation, referring to Figure 2
for labeling of vertices. The connection is given by matrices corresponding to pairs $u-v$ with $u \in V_0 $ and $v \in V_2 $ (not to be read as ``$u$ minus $v$'') , where the rows and columns are indexed by $V_3$ and $V_1$, respectively.  

In this case the connection consists of several $2 \times 2$ matrices and a bunch of $1 \times 1$ matrices; for the $1 \times 1$ matrics we suppress the matrix notation and simply refer to the entry as $u-v$. Following the notation of \cite{MR1686551} we introduce the positive numbers $$\beta_n = \sqrt{\displaystyle \frac{7+\sqrt{17}}{2}-n } , ]quad n \leq 5 .$$ Then the connection is:

\begin{center}
$\begin{array}{c |c  c}

b-2 & a  & c  \\
  \hline
   A  & \displaystyle \frac{-1}{\beta_1^2} & \displaystyle \frac{\beta \beta_2}{\beta_1^2} \\ C & \displaystyle \frac{\beta \beta_2}{\beta_1^2} & \displaystyle \frac{1}{\beta_1^2}\\
\end{array}$ \hspace{.3in}
$\begin{array}{c |c  c}

d-4 & c  & \tilde{c}  \\
  \hline
 C  & \displaystyle \frac{-1}{\beta_{-1}} &\displaystyle\frac{\beta }{\beta_{-1}} \\

  \tilde{C} & \displaystyle\frac{-\beta}{\beta_{-1}} & \displaystyle\frac{-1}{\beta_{-1}}\\
\end{array}$ \hspace{.3in}
$\begin{array}{c |c  c}

e-4 & c  & f  \\
  \hline
 C  & \displaystyle \frac{-\beta_5}{\beta_1 \beta_3} &\displaystyle\frac{\sqrt{2} \beta}{\beta_1 \beta_3}\\

  F & \displaystyle\frac{\sqrt{2} \beta}{\beta_1 \beta_3} & \displaystyle \frac{\beta_5}{\beta_1 \beta_3}\\
\end{array}$

\vskip2ex

$\begin{array}{c |c  c}

\tilde{e}-4 & \tilde{c}  & f  \\
  \hline
 \tilde{C}  & \displaystyle \frac{2}{\beta_{-1}} &\displaystyle\frac{- \beta_{3}}{\beta_{-1}}\\

  F &\displaystyle\frac{ \beta_{3}}{\beta_{-1}} & \displaystyle \frac{2}{\beta_{-1}}\\
\end{array}$ \hspace{.5in}
$\begin{array}{c |c  c}

\tilde{e}-5 & \tilde{c}  & f  \\
  \hline
 \tilde{C}  & \displaystyle \frac{2}{\beta_1^2} & \displaystyle\frac{\sqrt{2} \beta_{-1}}{\beta_1 \beta_2}\\

  F & \displaystyle\frac{-\sqrt{2} \beta_{-1}}{\beta_1 \beta_2} & \displaystyle \frac{2}{\beta_1^2}\\
\end{array}$
\end{center}

The $1 \times 1$ entries $e-2$, $\tilde{e}-6$, and $g-5$ are $-1$; all the other $1 \times 1$ entries are $1$.

Next we want to decompose $\kappa\bar{\kappa} $ into $1+\rho$. The $4$-graph for $\rho $ can be found by removing the identity from the vertical graphs in the the product connection $\kappa \bar{\kappa} $; see Figure \ref{4graphpics}.
%
%
%
%

We first define an isometry from the identity connection to $\kappa\bar{\kappa} $ given by the vertical edge space maps in Table \ref{tabkap}.

\begin{table}
\begin{tabular}{| l | l |}
\hline
 $** \mapsto *A* $ & $aa \mapsto \frac{1}{\beta} a1a + \frac{\beta_1}{\beta} a2a$\\
 $bb \mapsto \frac{1}{\beta_1} bAb + \frac{\beta_2}{\beta_1} bCb$ & $cc \mapsto \frac{\beta_3}{\sqrt{2} \beta} c2c +
\frac{1}{\beta} c3c+\frac{\beta_{-1}}{\sqrt{2}\beta} c4c$\\
$dd \mapsto \frac{1}{\sqrt{2}} dCd + \frac{1}{\sqrt{2}} d\tilde{C}d $ & 
$\tilde{c} \tilde{c} \mapsto \frac{\beta_{-1}}{ \sqrt{2} \beta} \tilde{c}4\tilde{c} + 
\frac{\beta_1}{\sqrt{2}\beta} \tilde{c}5\tilde{c} $\\
$\tilde{*}\tilde{*} \mapsto \tilde{*}\tilde{A}\tilde{*} $& $\tilde{a}\tilde{a}\mapsto  \tilde{a}6\tilde{a}$\\
$gg \mapsto gFg$ &$ff \mapsto \frac{\beta_3}{\beta}f4f+\frac{\beta_3}{2\sqrt{2}}f5f +\frac{1}{\beta_1}f6f$\\
$ee \mapsto \frac{\sqrt{2}}{\beta_1}eCe+\frac{\sqrt{2}}{\beta_2}eFe $&\\
$\tilde{e}\tilde{e} \mapsto \frac{\sqrt{2}}{\beta_1}\tilde{e}\tilde{C}\tilde{e}+\frac{\sqrt{2}}{\beta_2}\tilde{e}F\tilde{e} $&\\
$\tilde{g}\tilde{g} \mapsto \frac{\sqrt{2}}{\beta_3}\tilde{g}\tilde{C}\tilde{g}+\frac{1}{\beta_1}\tilde{g}A\tilde{g} $&\\
$\tilde{b}\tilde{b} \mapsto \tilde{b}F\tilde{b}$&\\
\hline
\end{tabular}
\caption{The vertical edge space maps for the embedding of $1$ in $\kappa \bar{\kappa} $}
\label{tabkap}
\end{table}
To find the connection for $\rho $ we map the vertical edge spaces of its $4$-graph to the orthgonal complements of the images of the vertical edge spaces of the identity in $\kappa \bar{\kappa} $ under the map in Table \ref{tabkap}.  The $4$-graph for $\rho $ has some double edges so we use subscripts to distinguish them (e.g. $ff_1$ and $ff_2 $ are the two edges connecting $f$ to $f$ in the right vertical graph). The vertical edge space maps are given in Table \ref{tabrho}. 

\begin{table}
\begin{tabular}{|l|l|}
\hline
 $bb \mapsto \frac{\beta_2}{\beta_1} bAb -\frac{1}{\beta_1} bCb$ & $aa \mapsto \frac{\beta_1}{\beta} a1a-\frac{1}{\beta}a2a$\\
$dd \mapsto \frac{1}{\sqrt{2}} dCd - \frac{1}{\sqrt{2}} d\tilde{C}d $ & 
$cc_1 \mapsto \frac{1}{ \beta \beta_2} c2c-\frac{\beta_{-1}}{2\sqrt{2}}c3c+\frac{\beta_{-1}}{2(\beta_2)^2}c4c  $\\
$ee \mapsto \frac{\sqrt{2}}{\beta_2}eCe-\frac{\sqrt{2}}{\beta_1}eFe $&$cc_2 \mapsto \frac{\beta_3}{ \beta_2} c2c-\frac{1}{\beta_2}c4c  $\\
$\tilde{e}\tilde{e} \mapsto \frac{\sqrt{2}}{\beta_2}\tilde{e}\tilde{C}\tilde{e}-\frac{\sqrt{2}}{\beta_1}\tilde{e}F\tilde{e} $&$\tilde{c}f_1\mapsto \tilde{c}4f$\\
$\tilde{g}\tilde{g} \mapsto \frac{1}{\beta_1}\tilde{g}\tilde{C}\tilde{g}-\frac{\sqrt{2}}{\beta_3}\tilde{g}A\tilde{g} $&$\tilde{c}f_1\mapsto \tilde{c}5f $\\
&$ f\tilde{c}_1\mapsto f4 \tilde{c} $\\
& $f\tilde{c}_2\mapsto f5\tilde{c} $\\
& $ff_1 \mapsto  \frac{1}{\sqrt{2}}f4f-\frac{1}{\sqrt{\beta_1}}f5f-\frac{1}{\beta_3}f6f $\\
& $ff_2 \mapsto  -\frac{\beta_5}{2\beta_2}f4f+\frac{\beta_1}{2\sqrt{2}}f5f-\frac{1}{\beta_3}f6f $\\
\hline
\end{tabular}
\caption{The vertical edge space maps for the embedding of $\rho $ in $\kappa \bar{\kappa} $. The coefficients associated to all simple edges between distinct vertices are set to equal $1$.}
\label{tabrho}
\end{table}

The connection for $\rho $ is then defined by pulling back the connection from $\kappa \bar{\kappa} $ using this map. With this definition, we have $\kappa \bar{\kappa} =1+\rho$, as required. 

Next, we will need the connection for $\alpha $. As in the $AH$ case, the vertical graphs for $\alpha $ connect each vertex to its reflection in the vertical line through $\eta $ in Figure \ref{ahp1graphs}. However, unlike in the $AH$ case, where the only connection for the $4$-graph of $\alpha $ up to gauge equivalence is the trivial one, here there are two different possible connections for the $4 $-graph of $ \alpha$.

\begin{lemma}
The connection for $\alpha $ has all entries equal to $1$ except for the $e-f$ entry, which is $-1$.
\end{lemma}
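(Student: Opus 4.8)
The plan is to determine the connection for $\alpha$ by imposing the connection axioms (unitarity, initialization, harmonicity, renormalization) on its $4$-graph — the right-hand $4$-graph of Figure \ref{4graphpics}, whose top and bottom graphs are the principal graph and whose vertical graphs are the reflection matching through $\eta$ — and then fixing the one residual sign by consistency with the already-computed connection for $\kappa$. First I would record the combinatorics of the reflection: it fixes $\eta$ and the vertices on the axis and pairs off the rest. Since the only multi-edges of the $AH+1$ principal graph occur at the vertices that carry the $2\times 2$ blocks of the $\kappa$ connection (around $c$, $\tilde{c}$, $e$, $\tilde{e}$, $f$), almost every cell of the $\alpha$ connection is $1\times 1$; unitarity then forces each such entry to be a phase, and the normalization axioms force the entries touching the endpoints of the graph to equal $1$.

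Next I would exploit the gauge freedom — a phase on each vertical edge — to normalize entries to $1$ wherever possible. The obstruction to trivializing all of them is measured by gauge-invariant holonomies: for each elementary loop of the $4$-graph built from a top edge, a vertical (reflection) edge, the reflected bottom edge, and the return vertical edge, the product of the four connection values is gauge invariant, because each vertical phase is traversed once in each direction and cancels. These holonomies are exactly the invariants that distinguish the two connections referred to above. I expect every such holonomy to be forced to $1$ except the single loop running through the paired vertices $e$ and $f$, where the coupling imposed by the $2\times 2$ block structure leaves unitarity a genuine binary choice $\pm 1$; a final gauge transformation then places this sign on the $e-f$ cell and returns all other entries to $1$.

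The main obstacle, and the step where $AH+1$ genuinely departs from $AH$, is showing that this residual holonomy is $-1$ rather than $+1$. I would resolve this by requiring that $\alpha$ actually implement the reflection symmetry at the level of the subfactor data — that is, that conjugating the computed $\kappa$ connection by the order-two object $\alpha$ reproduce the reflected $\kappa$ connection (equivalently, that $\alpha$ be compatible with the embeddings of $\rho$ in $\kappa\bar{\kappa}$ recorded in Tables \ref{tabkap} and \ref{tabrho}). Because the $\kappa$ connection carries honest signs of its own — the $1\times 1$ entries $e-2$, $\tilde{e}-6$, $g-5$ equal $-1$, and the $2\times 2$ blocks at $\tilde{e}$ and $c$ have mixed signs — the reflection is a symmetry of $\kappa$ only once the sign $-1$ is inserted at the $e-f$ cell; matching the two sides of this conjugation is the bookkeeping-heavy part of the argument, and its outcome is precisely the asserted sign.

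The reason this subtlety is invisible in the $AH$ case is that there the relevant holonomy is trivial, so the unique biunitary connection on the $4$-graph of $\alpha$ is the all-ones one; in $AH+1$ the enlarged graph creates a nontrivial loop through $e$ and $f$ whose holonomy must be $-1$ to be consistent with the signs already present in $\kappa$. I would finish by verifying biunitarity of the resulting connection directly on the explicit matrices — a routine but lengthy check of the same type performed for $\kappa$ — which is the computation confirmed in the accompanying Mathematica notebook \emph{ahp2\_gauge.nb}.
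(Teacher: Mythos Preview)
Your analysis that there are exactly two gauge-equivalence classes for the connection on the $4$-graph of $\alpha$, distinguished by a single $\pm 1$ holonomy at the $e$--$f$ cell, matches what the paper asserts. However, your method for selecting the sign diverges from the paper's and contains a gap.

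The paper's argument is purely experimental: with the all-ones connection, the authors report that they \emph{discovered by trial and error} that $\rho\alpha\kappa$ and $\alpha\rho\alpha\kappa$ fail to be vertically gauge equivalent. Since this gauge equivalence is exactly the relation needed for the construction (it is the $AH+1$ analogue of Asaeda--Haagerup's main calculation), the other connection must be the correct one; the positive verification that the $-1$ choice \emph{does} work is then the content of Appendix~\ref{bigint} and the accompanying notebook. No conceptual criterion is offered or needed.

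Your proposed criterion --- that conjugating $\kappa$ by $\alpha$ should reproduce a ``reflected $\kappa$'' --- is not well posed as stated: $\alpha$ is an $N$--$N$ bimodule and $\kappa$ is $N$--$M$, so there is no conjugation of $\kappa$ by $\alpha$ without an $M$--$M$ counterpart of $\alpha$, which is never introduced here. The alternative phrasing, compatibility with the embeddings in Tables~\ref{tabkap}--\ref{tabrho}, is automatic for any invertible $\alpha$ and so cannot distinguish the two candidates. Even granting some interpretation, you neither argue that your condition is equivalent to $\rho\alpha\kappa \cong \alpha\rho\alpha\kappa$ nor carry out the computation; and the specific $-1$ entries of $\kappa$ you cite ($e$--$2$, $\tilde{e}$--$6$, $g$--$5$) are gauge-dependent choices within the unique gauge class of $\kappa$, not gauge-invariant obstructions that could force a holonomy in $\alpha$. (A minor point: the notebook verifies the gauge transformation of Appendix~\ref{bigint}, not biunitarity of $\alpha$, which is trivial for $1\times 1$ cells.)
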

\begin{proof}
There are two connections up to gauge equivalence: the one mentioned in the statement and the one with all entries equal to $1$. However for the connection with all entries equal to $1$, we discovered by trial and error that the connections $\rho \alpha \kappa $ and $\alpha \rho \alpha \kappa $ are not vertical gauge equivalent.
\end{proof}

Finally, we compute the composite connections $\rho \alpha \kappa $ and $\alpha \rho \alpha \kappa $ by direct mutiplication, and then compute a vertical gauge transformation between them. The gauge transformation matrices are given in Appendix \ref{bigint}, and their correctness is verified in the Mathematica notebook accompanying the arXiv submission of this paper.

\subsection{Verifying the Asaeda-Haagerup algebra relations}
Now that we have the necessary connections and gauge transformations, we are ready to evaluate intertwiner diagrams and verify the Q-system equations for $1+\bar{\kappa} \alpha \kappa $.

 First we fix some basic intertwiners. Let $r_{\kappa} \in (1,\kappa \bar{\kappa}) $ be the isometry defined by the vertical edge space maps in Table \ref{tabkap}. Let  $v \in (\rho,\kappa \bar{\kappa} )$ be the isometry defined by the vertical edge space maps in Table \ref{tabrho}. Let $w \in (\rho \alpha \kappa, \alpha \rho \alpha \kappa)$ be the isomorphism defined by the vertical gauge transformation given in the appendix.

Next we define some diagrams as in \cite{MR2812458}.  We set the coevaluation $$\hpic{rkappa} {0.4in} = \sqrt{\beta }r_{\kappa}$$
and let
$$\hpic{rbarkappa} {0.4in}   \sqrt{\beta }\bar{r}_{\kappa}$$
 be the adjoint of the corresponding evaluation. By the duality relation, we have that for any upper vertex $x$ connected to a lower vertex $Y$, $$r_{\kappa}(xx,xYx)\bar{r}_{\kappa}(YY,YxY)=\frac{1}{\beta}   .$$

Also set $$\hpic{rhokk} {0.8in} = \sqrt{\frac{\beta}{\beta_1}} v, \quad \hpic{rakarak} {0.8in} = w .$$

For each of these diagrams define the diagram obtained by reflecting across the horizontal axis to be the adjoint. Define $$\hpic{rhok_k} {0.8in} = \hpic{rhok_kdef} {0.8in} , \hpic{kbar_rho} {0.8in} = \hpic{kbar_rhodef} {0.8in} $$
and again define the diagrams obtained by horizontal reflections to be the adjoints. 

Then it is straightforward to check that
$$ \hpic{rhok_k} {0.8in} = \hpic{rhok_kdef2} {1.1in} , \quad \hpic{kbar_rho} {0.9in} = \hpic{kbar_rhodef2} {1.0in} .$$

Next let $$\hpic{rrho} {0.4in} = \frac{\beta_1}{\beta} \hpic{bigrhokappa} {1.2in} , \hpic{rhorhorho} {0.8in} = \frac{\beta_1}{\beta_2} \hpic{bigrhorhorho} {1.4in} ,$$

$$\hpic{ararar} {0.8in} = \frac{\beta_1}{\beta} \hpic{bigararar} {1.2in} .$$

Again, let each of the diagrams reflected in the horizontal be the adjoint, and again we have $$\hpic{rhorhorhobar} {0.8in}  = \hpic{rhorhorhol} {0.8in} = \hpic{rhorhorhor} {0.8in} .$$

We now compute a bunch of coefficients for later use.

\begin{lemma} \label{coflem}
We have the following coefficients.
\begin{enumerate}
\item  $$\hpic{rrho1} {0.5in} = \hpic{2rrho2} {0.6in} = \beta_1 , \quad \hpic{2rrho3} {0.6in} = \hpic{2rrho4} {0.5in} = 1   $$

\item  $$- \hpic{2rhorhorho4} {0.7in } =\hpic{rhorhorho1} {0.7in} = \beta_2\sqrt{\frac{\beta_1 }{2}}$$
$$ \hpic{rhorhorho5} {0.7in} = \hpic{rhorhorho6} {0.7in} = \hpic{rhorhorho7} {0.7in} = \hpic{rhorhorho8} {0.7in} = \sqrt{\frac{\beta_1 }{2}}$$

\item $$\hpic{ararar11} {0.6in} =  \hpic{ararar8} {0.6in} = -\frac{1}{\sqrt{\beta_1}} $$ $$\hpic{ararar9} {0.6in } = \hpic{ararar10} {0.6in} =- \hpic{ararar7} {0.6in}  =  -\hpic{ararar12} {0.6in}   =\frac{\sqrt{\beta_1}}{\beta_2}$$ $$  \hpic{ararar13} {0.6in} =- \hpic{ararar14} {0.6in} = \sqrt{\beta_1} $$

\end{enumerate}
\end{lemma}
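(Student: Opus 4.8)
```latex
\textbf{Proof proposal for Lemma \ref{coflem}.}

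The plan is to evaluate each of the listed coefficients by reducing the composite intertwiner diagrams to products and compositions of the elementary intertwiners fixed just above the lemma statement, and then reading off the numerical value on an explicitly chosen state. Recall that each diagram in the lemma represents an intertwiner living in a $1$-dimensional Hom-space, so it is completely determined by a single nonzero coefficient; this is exactly the key point emphasized in the discussion of diagrammatic calculus in Section 2.3. Concretely, for a given coefficient I would first label the top and bottom boundary edges of the diagram by the vertical edges specified in the picture, and then sum over all states (consistent ways of filling in the internal regions and strings with vertices and edges of the $4$-graph), where each state contributes the product of the vertex values assigned by the connection data.

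First I would handle part (1), the coefficients of the trivalent vertex $r$ realizing $\rho \hookrightarrow \kappa\bar\kappa$. These follow directly from the vertical edge space maps for the embedding $v$ in Table \ref{tabrho} together with the normalization $\hpic{rhokk}{0.8in} = \sqrt{\beta/\beta_1}\, v$: the coefficients $\beta_1$ and $1$ are read off from the entries of that table (multiplied by the scaling factor) on the states corresponding to the indicated boundary edges. Next, for part (2) I would expand the $6$-valent vertex $\hpic{rhorhorho}{0.8in} = (\beta_1/\beta_2)\,\hpic{bigrhorhorho}{1.4in}$ and evaluate the coefficients of the map $\rho\alpha\rho \to \alpha\rho\alpha$ on the relevant states. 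Here each state is evaluated by tracing a path through the $\rho$-connection and the $\alpha$-connection (whose nontrivial entry is the $e$--$f$ sign $-1$ from the previous lemma), and summing. The sign relation $-\hpic{2rhorhorho4}{0.7in} = \hpic{rhorhorho1}{0.7in}$ is precisely where the $-1$ in the $\alpha$ connection enters, and the common value $\sqrt{\beta_1/2}$ for the four symmetric coefficients follows from the symmetry of the table entries. Part (3), the coefficients of the $\alpha\rho\alpha\to\rho\alpha\rho$ vertex $\hpic{ararar}{0.8in} = (\beta_1/\beta)\,\hpic{bigararar}{1.2in}$, is handled analogously, now using the gauge transformation data $w$ from Appendix \ref{bigint} together with the $\rho$- and $\alpha$-connections.

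The main obstacle I anticipate is bookkeeping rather than conceptual: the composite diagrams involve several tensor factors and compositions, so the sum over states can have multiple nonzero terms, and one must be careful to enumerate exactly the admissible states (those compatible with the bipartite structure of the vertical graphs, including correctly distinguishing the doubled edges such as $ff_1,ff_2$ and $cc_1,cc_2$) and to apply the correct connection entry and duality normalization $r_\kappa(xx,xYx)\bar r_\kappa(YY,YxY)=1/\beta$ at each vertex. The nontrivial sign in the $\alpha$ connection must be tracked consistently throughout, since it is responsible for the minus signs relating the paired coefficients in parts (2) and (3). Since these are finite sums of explicit products of the tabulated connection values, correctness can in principle be checked mechanically, and indeed the accompanying Mathematica notebook \textit{ahp2\_gauge.nb} verifies the underlying connection and gauge data; the role of the lemma is to package the outputs of these finite state-sum evaluations into the normalized coefficients that will feed into the verification of the Q-system relations in Theorem \ref{mnthm}.
```
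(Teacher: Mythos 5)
Your overall strategy---decompose each composite diagram into elementary intertwiners, then evaluate a single well-chosen state as a product of tabulated coefficients, which suffices because the relevant Hom spaces are $1$-dimensional---is exactly the paper's method. But your plan misassigns which diagrams occur in which part of the lemma, and this would derail the actual computation. The diagrams in part (2) are coefficients of the \emph{trivalent} vertex $\rho \rightarrow \rho^2$, defined as $\frac{\beta_1}{\beta_2}$ times a composite built from $\bar{r}_\kappa$ and three copies of $v$; they contain no $\alpha$ strings at all. The $6$-valent vertex $\rho\alpha\rho \rightarrow \alpha\rho\alpha$ is part (3), not part (2). Consequently your claim that the sign relation in part (2) ``is precisely where the $-1$ in the $\alpha$ connection enters'' is wrong: in the paper's evaluation that sign arises from a negative coefficient of the embedding $v$ read off from Table \ref{tabrho} (the factor $v(bb,bAb)$ in the state chosen there). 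The $e$--$f$ sign of the $\alpha$ connection influences the lemma only indirectly, through the gauge transformation $w$ of Appendix \ref{bigint}, and hence only in part (3).

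There are two further inaccuracies. First, part (1) cannot be read off from Table \ref{tabrho} alone: its diagram is the coevaluation for $\rho$ (not the embedding $\rho \hookrightarrow \kappa\bar{\kappa}$, which is $v$ itself), defined as $\frac{\beta_1}{\beta}$ times a composite involving $r_\kappa$, $\bar{r}_\kappa$, \emph{and} two copies of $v$; the paper evaluates it as a product of one coefficient of $r_\kappa$, one of $\bar{r}_\kappa$ (via the duality normalization $r_{\kappa}(xx,xYx)\bar{r}_{\kappa}(YY,YxY)=\frac{1}{\beta}$), and two of $v$, so Table \ref{tabkap} is needed as well. Second, the state evaluation multiplies coefficients of the intertwiners $r_\kappa$, $v$, $w$ (from Tables \ref{tabkap} and \ref{tabrho} and Appendix \ref{bigint}), not entries of the $\rho$- and $\alpha$-connections themselves; the connections were used earlier to construct $v$ and $w$, but they do not appear as vertex weights in these coefficient computations. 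With the three parts correctly matched to their defining composites---part (1) to $(r_\kappa,\bar{r}_\kappa,v,v)$, part (2) to $(\bar{r}_\kappa,v,v,v)$, part (3) to $(w,v,r_\kappa)$---the remainder of your plan coincides with the paper's proof, which carries out one sample calculation of each type and defers the remaining finite, mechanical checks.
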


\begin{proof}
These calculations are similar to those in \cite{MR2812458}. Each coefficient diagram is expressed as a product of simpler diagrams, which are evaluated using Tables \ref{tabkap} and \ref{tabrho} (for $r_{\kappa} $ and $v$) and Appendix \ref{bigint} (for $w$). For the convenience of the reader we review one calculation of each type here.
\begin{enumerate}
\item $$\hpic{rrho1} {0.5in} = \frac{\beta_1}{\beta} \hpic{bigrhokappa1} {0.8in} = \frac{\beta_1}{\beta} (\hpic{rkappa2} {0.4in} )( \hpic{rbarkappa1} {0.4in} )( \hpic{rhokk1} {0.5in} )( \hpic{rhokk2} { 0.5in} )$$ $$ = \frac{\beta_1}{\beta}\frac{\beta^2}{\beta_1} r_{\kappa}(**,*A*) \bar{r}_{\kappa}(AA,AbA)v(*b,*Ab)v(b*,bA*)$$ $$= \beta(1)(\frac{\beta_1}{\beta})(1)(1)=\beta_1.$$

%
%
%
%
%
%
%
%

\item $$\hpic{rhorhorho1} {0.6in } = \frac{\beta_1}{\beta_2} \hpic{bigrhorhorho1} {0.8in} = \frac{\beta_1 }{\beta_2} (\hpic{rbarkappa1} {0.4in} )( \hpic{rhokk1} {0.5in} )( \hpic{rhokk1} {0.5in} )( \hpic{rhokk7} { 0.5in} ) $$ $$= \frac{\beta_1}{\beta_2} \frac{\beta^2}{\beta_1^{\frac{3}{2}}} \bar{r}_{\kappa}(AA,AbA )v(*b,*Ab)v(*b,*Ab)v(bb,bAb)$$ $$=\beta\sqrt{\frac{\beta_1 }{2}}(\frac{\beta_1} {\beta})(1)(1)(-\frac{\beta_2 }{\beta_1 } )=-\beta_2\sqrt{\frac{\beta_1 }{2}} .$$

%
%
%
%
%
%
%
%

%
%
%
%
%
%
%

\item $$\hpic{ararar8} {0.6in} = \frac{\beta_1}{\beta} \hpic{bigararar10} {0.8in} = \frac{\beta_1}{\beta} (\hpic{rakarak10} {0.6in} )( \hpic{rhokk40} {0.5in} )( \hpic{rkappa30} {0.5in} )$$ $$=\frac{\beta_1}{\beta} \frac{\beta}{\sqrt{\beta_1}} w(b\tilde{b}g\tilde{g}\tilde{A},b*\tilde{*}\tilde{A})v(\tilde{*}\tilde{g},\tilde{*}\tilde{A}\tilde{g})r(\tilde{g}\tilde{g},\tilde{g}\tilde{A}\tilde{g} )$$ $$ = \sqrt{\beta_1} (-1) (1 ) (\frac{1}{\beta_1} )=\frac{1}{\sqrt{\beta_1}}.$$

%
%
%
%
%
%
%
%
%
%
%

%
%
%

\end{enumerate}

\end{proof}

%
%
%

%

With these coefficients we can verify the Asaeda-Haagerup relations \ref{algebrarelations}.

\begin{theorem} \label{mnthm}
The Asaeda-Haagerup algebra relations \ref{algebrarelations} are satisfied for $AH+1$.
\end{theorem}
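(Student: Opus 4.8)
The plan is to verify each of the four relations in Formula~\ref{algebrarelations} directly, by reducing each side to an intertwiner living in a one-dimensional space and comparing the single scalar coefficient that determines it, exactly as in the construction of $AH+1$ in \cite{MR2812458}. Since $(\sigma,\sigma^2)$ is one-dimensional and spanned by the diagram $S$ built from the trivalent embeddings of $\rho$ in $\kappa\bar\kappa$ and the $6$-valent intertwiner in $(\rho\alpha\rho,\alpha\rho\alpha)$, every composite appearing in relations (1)--(3) lands in a one-dimensional space; so it suffices to pick a convenient state, label the top and bottom edges of each diagram accordingly, sum over all consistent fillings, and multiply vertex coefficients. All the elementary building blocks---$r_\kappa$, $v$, and $w$---have been made explicit in Tables~\ref{tabkap}, \ref{tabrho}, and Appendix~\ref{bigint}, and the basic coefficients assembled in Lemma~\ref{coflem} provide the numerical inputs.

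Concretely, I would first decompose each of the large intertwiner diagrams in relations (1)--(4) into tensor products and compositions of the normalized pieces fixed just before Lemma~\ref{coflem}: the coevaluation/evaluation for $\kappa$, the $\rho$-in-$\kappa\bar\kappa$ trivalent vertex, the triple-$\rho$ vertex in $(\rho,\rho^2)$, and the $\alpha\rho\alpha$-to-$\rho\alpha\rho$ vertex coming from $w$. For relation (1), I would compute both $\alpha\rho$- and $\rho\alpha$-labeled diagrams and extract the common scalar $c$, checking it is the same on both sides (this also pins down $c$ for use later). For relations (2) and (3), where both sides are single scalars times an identity on a one-dimensional space, I would evaluate one well-chosen state on each side and confirm equality of coefficients using Lemma~\ref{coflem} together with the duality normalization $r_\kappa(xx,xYx)\bar r_\kappa(YY,YxY)=1/\beta$. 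The arithmetic simplifications will all be in terms of the $\beta_n=\sqrt{(7+\sqrt{17})/2-n}$, so the verification is a finite, if tedious, symbolic computation.

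The one structurally different relation is (4), the symmetry relation $\hpic{Rlt1sym}{1in}=\hpic{Rrt1sym}{1in}$, whose two sides need not live in a one-dimensional space; here a single matching coefficient may not suffice and I would instead verify equality of the full vertical edge space maps, computing the coefficient for each admissible pair of boundary edges and checking them all agree. I expect this to be the main obstacle, both because it requires tracking a genuinely higher-dimensional intertwiner and because it is the relation most sensitive to the new feature of the $AH+1$ case---namely the nontrivial sign in the $\alpha$ connection, whose $e$--$f$ entry is $-1$ by the Lemma preceding this theorem. This sign propagates through the $\alpha$-labeled strands and could in principle obstruct the symmetry; so the crucial check is that when all states are summed the sign contributions cancel consistently on both sides. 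The correctness of the underlying gauge data in Appendix~\ref{bigint}, on which every coefficient depends, is the other potential pitfall, and I would lean on the accompanying Mathematica verification to certify those matrices before trusting the hand computation.
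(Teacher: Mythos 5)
There is a genuine gap in your treatment of relation (3). You assert that ``every composite appearing in relations (1)--(3) lands in a one-dimensional space,'' deducing this from the one-dimensionality of $(\sigma,\sigma^2)$, and you conclude that a single well-chosen state suffices in each case. This is false for $AH+1$: the paper computes $\dim(\rho\alpha\rho,\alpha\rho\alpha\rho\alpha)=2$, so the two sides of relation (3) live in a two-dimensional space and agreement of one nonzero coefficient does not force equality of the intertwiners. (This is precisely a point where the $AH+1$ case differs from the $AH$ case treated in \cite{MR2812458}, where all but the last relation did live in one-dimensional spaces; one cannot simply import that feature.) The missing idea is the paper's decomposition argument: using the fusion rules one writes $\rho\alpha\rho=\alpha\rho\alpha+\pi+\alpha\pi$ and $\alpha\rho\alpha\rho\alpha=\rho+\pi+\alpha\pi$, so the common irreducible summands are exactly $\pi$ and $\alpha\pi$; since the vertical graphs of $\rho$ and $\alpha\rho\alpha$ have no edges joining $*$ to $e$ or $\tilde e$, the simple edges $*b\tilde b e$ and $*b\tilde b\tilde e$ of $\rho\alpha\rho$ must lie one in each of these two summands, and therefore evaluating the state diagrams at these two particular edges does determine the intertwiner on both components. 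Without some argument of this kind (or a full check of the vertical edge space maps, which you reserve only for relation (4)), your verification of relation (3) is incomplete.

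Your handling of the remaining relations is essentially the paper's: relations (1) and (2) are correctly reduced to single-state evaluations (for (2) one must, as you implicitly do, pick an edge with a nonzero coefficient, and the space $(\rho,\alpha\rho\alpha\rho\alpha)$ is indeed one-dimensional), and the numerical inputs from Lemma \ref{coflem}, Tables \ref{tabkap}--\ref{tabrho}, and Appendix \ref{bigint} are the right ones. For relation (4) you propose a brute-force comparison of all boundary-edge coefficients; that would work but is heavier than necessary --- the paper simply observes that the argument of \cite{MR2812458} applies verbatim. Your instinct that the new $-1$ in the $e$--$f$ entry of the $\alpha$ connection is the delicate new feature is reasonable, but the place where extra care is actually forced in the $AH+1$ case is relation (3), not relation (4).
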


\begin{proof}

\begin{enumerate}
\item The left hand side of each equation is a scalar, so we can evaluate the unique state comptabile with any given edge.
For the first equation we have:\\
 $$\hpic{eq2lhsafter_3NNstate2} {1.4in} $$ $$=   (\hpic{alal1} {0.5in} )( \hpic{rrho1} {0.5in } ) ( \hpic{ararar8} {0.5in} ) ( \hpic{ararar14} {0.5in} )$$ $$ = \beta_1 \frac{1}{\sqrt{\beta_1}} \sqrt{\beta_1}=\beta_1 .$$
The second equation is computed similarly, using the unique state for the edge pair
$(*b\tilde{b},*b\tilde{b}) $.

\item Since $dim(\rho,\alpha\rho\alpha\rho\alpha )=1$, we can compare the two sides of the equations using any nonzero coefficient. We choose the coefficient corresponding to the edges $(*b, *\tilde{*}\tilde{g}g\tilde{b}b) $, which admits a unique compatible state for each of the diagrams in the equation. Then we have

$$\hpic{eq1NNlhsstate2} {1.0in} = \hpic{eq1NNrhsstate2} {1.0in} =-\sqrt{\beta_1},$$
where as before we evaluate the states by breaking up each diagram as a product of smaller diagrams.

\item In this case, $dim(\rho \alpha \rho, \alpha\rho\alpha\rho\alpha)=2 $, so it is not sufficient
to compare a single nonzero coefficient. However, using the labeling in Figure \ref{ahp1graphs}, we have that $\rho\alpha \rho=\alpha\rho\alpha+\pi+\alpha\pi $ and $\alpha\rho\alpha \rho\alpha=\rho+\pi+\alpha\pi $, so the common summands are $ \pi$  and $\alpha \pi $.
The vertical graphs for $\rho $ and $\alpha \rho \alpha $ do not have any edges connecting $*$ to $e$ or $\tilde{e} $, so the  
(simple) edges $*b\tilde{b}e $ and $*b\tilde{b}\tilde{e} $ in $\rho \alpha \rho $ must belong to the two summands $\pi $ and $\alpha \pi=\pi \alpha $, and one must belong to each. 
 Therefore to determine an intertwiner it is sufficient to evaluate state diagrams for these two edges.

We have $$-\hpic{eq2lhsafter_1NNstate2} {1in}  =- \hpic{eq2rhsafter_1NNstate2} {1in} = \hpic{eq2lhsafter_1NNstate3} {1in} =  \hpic{eq2rhsafter_1NNstate3} {1in}  =\frac{(\beta_1)^2}{2}.$$

\item The proof is the same as in  \cite{MR2812458} and we omit it.

\end{enumerate}

\end{proof}

\begin{theorem}
There exists a subfactor whose principal and dual graphs are both $$ \hpic{AHp2dual} {0.65in}   .$$
\end{theorem}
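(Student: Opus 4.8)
The plan is to construct the subfactor by producing an irreducible Q-system for the object $1 + \bar{\kappa}\alpha\kappa$ in the dual even part of the $AH+1$ subfactor, exactly as was done for $AH+1$ itself in \cite{MR2812458}. The dimension of $\bar{\kappa}\alpha\kappa$ equals $\dim(\bar{\kappa}\kappa) = \frac{7+\sqrt{17}}{2} - 1 = \frac{5+\sqrt{17}}{2}$, so the index of the resulting subfactor would be $1 + \frac{7+\sqrt{17}}{2} = \frac{9+\sqrt{17}}{2}$, matching the claimed value. Since $\sigma := \bar{\kappa}\alpha\kappa$ is a simple, symmetrically self-dual object with $\dim(\sigma) = \frac{7+\sqrt{17}}{2} > 1$, Lemma \ref{plus1} applies: the object $1 + \sigma$ admits a Q-system if and only if there is an isometry in $(\sigma, \sigma^2)$ satisfying the two relations of that lemma. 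Thus the entire existence question reduces to verifying these relations in the concrete $2$-category of bimodules for $AH+1$.

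\emph{The key steps, in order, are as follows.} First I would record that $(\sigma,\sigma^2)$ is one-dimensional and spanned by the diagram $\hpic{S}{1.0in}$, so the candidate isometry is determined up to normalization by the embedding $\rho \hookrightarrow \kappa\bar{\kappa}$ (the trivalent vertices) and the chosen generator of $(\rho\alpha\rho, \alpha\rho\alpha)$ (the six-valent vertex). Second, I would invoke the reduction, already established for the $AH$ and $AH+1$ cases, that the two relations of Lemma \ref{plus1} are equivalent to the four Asaeda--Haagerup algebra relations \ref{algebrarelations}. Third, and this is the genuine content of the theorem, I would verify relations \ref{algebrarelations} directly using the explicit connection and gauge-transformation data computed in Section 4: the connection for $\kappa$, the embeddings $r_\kappa$ and $v$ from Tables \ref{tabkap} and \ref{tabrho}, the connection for $\alpha$ (with the crucial $e$--$f$ sign), and the gauge transformation $w$ between $\rho\alpha\kappa$ and $\alpha\rho\alpha\kappa$ from Appendix \ref{bigint}. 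This verification is precisely the content of Theorem \ref{mnthm}, whose proof evaluates each relation on one or two well-chosen states, using the coefficients assembled in Lemma \ref{coflem}.

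\emph{The final step} is to pass from the Q-system relations back to an actual subfactor. Having produced an irreducible Q-system $A = 1 + \sigma$ in the $C^*$-tensor category $\mathcal{M}'$ (the dual even part of $AH+1$, which coincides with a fusion category sitting inside the $2$-category of $AH+1$-bimodules), I would apply the general principle recorded in Section 2.1: every irreducible Q-system in a $C^*$-tensor category with simple unit arises from a finite-index subfactor $N \subseteq M$ whose principal even part is the tensor category generated by $A$ \cite{MR1257245}, with index equal to $\dim(A)$ \cite{MR1444286}. Since the subfactor is $3$-supertransitive and self-dual, its principal and dual graphs coincide and are the displayed graph $\hpic{AHp2dual}{0.65in}$; self-duality follows from the observation (attributed to Morrison in the acknowledgements) that the dual graph must equal the principal graph, and can be confirmed by comparing Frobenius--Perron weights as was done for $AH+1$.

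\emph{The main obstacle} is Theorem \ref{mnthm}, that is, the actual verification of the algebra relations \ref{algebrarelations}. The difficulty is not conceptual but computational: it requires first replicating Asaeda and Haagerup's $25$-page gauge-transformation calculation for the more complicated $AH+1$ graph, including correctly handling the nontrivial sign in the $\alpha$-connection, and then evaluating numerous composite intertwiner diagrams by summing over states and multiplying vertex coefficients. The delicate points are choosing, for each relation, states (or pairs of states, in the two-dimensional case of relation $3$ where one must separate the summands $\pi$ and $\alpha\pi$) that determine the intertwiner on the nose, and ensuring the gauge-transformation data in Appendix \ref{bigint} is correct --- which is why its correctness is independently verified in the accompanying Mathematica notebook.
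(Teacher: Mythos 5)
Your proposal follows the paper's own proof exactly: the paper likewise reduces existence to an irreducible Q-system for $1+\bar{\kappa}\alpha\kappa$ in the dual even part of $AH+1$ via Lemma \ref{plus1} and the argument of \cite{MR2812458}, takes the verification of the algebra relations \ref{algebrarelations} in Theorem \ref{mnthm} (resting on the connection and gauge-transformation data) as the genuine content, obtains the subfactor of index $\frac{9+\sqrt{17}}{2}$ from the Q-system, and reads off the principal and dual graphs by standard fusion-rule calculations. One minor slip to fix: for $AH+1$ one has $\dim(\bar{\kappa}\kappa)=\frac{7+\sqrt{17}}{2}$, not $\frac{7+\sqrt{17}}{2}-1=\frac{5+\sqrt{17}}{2}$, although your stated index $1+\frac{7+\sqrt{17}}{2}=\frac{9+\sqrt{17}}{2}$ is the correct one.
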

\begin{proof}
Since the Asaeda-Haagerup algebra relations \ref{algebrarelations} are satisfied for $AH+1$, there is a $Q$-system for $1+\bar{\kappa}\alpha\kappa$ by the same argument as in \cite{MR2812458}, giving a subfactor with index $\frac{9+\sqrt{17}}{2} $. The principal and dual graphs can be easily computed by standard fusion rule calculations.
\end{proof}
We call this subfactor the $AH+2$ subfactor.

\begin{corollary}
There is an irreducible, noncommuting quadrilateral of factors $\begin{array}{ccc}
P&\subset &M \cr
\cup& &\cup \cr
N&\subset &Q
\end{array}$ such that $P \subset M $ and $Q \subset M $ are both the $AH+1$ subfactor and $N \subset P $ and $N \subset Q$ are both the $AH+2$ subfactor.
\end{corollary}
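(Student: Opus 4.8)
The plan is to extract the quadrilateral directly from the $AH+2$ Q-system constructed above, using the same correspondence between Q-systems of the special form $1+\bar\kappa\alpha\kappa$ and noncommuting quadrilaterals that was used to pass from $AH$ to the Haagerup-type quadrilateral in \cite{MR2418197} and from $AH$ to $AH+1$ in \cite{MR2812458}. First I would realize $AH+1$ as a subfactor $R \subset M$ with fundamental bimodule $\kappa$ and period-two reflection automorphism $\alpha$ as above, and let $M \subset \widehat M$ be the extension determined by the Q-system $A = 1+\bar\kappa\alpha\kappa$ in the dual even part of $AH+1$, so that ${}_M\widehat M{}_M = A$ and $M \subset \widehat M$ is $AH+2$. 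This gives a tower $R \subset M \subset \widehat M$ whose diagonal $R \subset \widehat M$ has index $\frac{7+\sqrt{17}}{2}\cdot\frac{9+\sqrt{17}}{2}=20+4\sqrt{17}$; writing $N:=R$, the asserted quadrilateral will sit inside $N \subset \widehat M$, with $\widehat M$ in the role of $M$ from the statement.

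The real content is to produce two intermediate subfactors $P,Q$ of $N \subset \widehat M$ that are \emph{not} the obvious intermediate subfactor $M$ of the tower. Indeed $M$ has the wrong type --- $N \subset M$ is $AH+1$ and $M \subset \widehat M$ is $AH+2$ --- whereas the quadrilateral demands the reverse factorization, with $N \subset P$ equal to $AH+2$ and $P \subset \widehat M$ equal to $AH+1$. The point, which is exactly the correspondence of \cite{MR2418197}, is that the $\alpha$-twisted shape of $A$ encodes precisely such a crossed intermediate subfactor $P$; applying the period-two reflection then yields a conjugate intermediate subfactor $Q$, and $P$ and $Q$ are interchanged by the resulting $\mathbb{Z}/2$-symmetry of $N \subset \widehat M$. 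I would then check the two quadrilateral axioms, $P \vee Q = \widehat M$ and $P \cap Q = N$; these translate into a fusion-combinatorial statement about the bimodules attached to $P$ and $Q$, and it is here that the Asaeda-Haagerup algebra relations~\ref{algebrarelations}, now verified for $AH+1$ in Theorem~\ref{mnthm}, do the work, exactly as in the established $AH\to AH+1$ case.

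With the quadrilateral in hand the remaining identifications are short. The upper inclusions $P \subset \widehat M$ and $Q \subset \widehat M$ are $AH+1$ by construction, while the lower inclusions $N \subset P$ and $N \subset Q$ have index $\dim A = 1+\dim(\bar\kappa\alpha\kappa)=\frac{9+\sqrt{17}}{2}$ and principal and dual graphs equal to the (self-dual) $AH+2$ graph by the preceding theorem, so each is $AH+2$; the $\mathbb{Z}/2$-symmetry exchanging $P$ and $Q$ ensures that the two upper, and the two lower, inclusions are genuinely the same subfactor. Irreducibility and the noncommuting property follow as in the reference cases: the relevant relative commutants are trivial because of the supertransitivity and the irreducibility $\dim\mathrm{Hom}(1,A)=1$ of the Q-system, and the square is noncommuting because $\alpha$ is a \emph{nontrivial} automorphism, so that $P \neq Q$ lie in general position and the angle operator between them is nonzero.

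The hard part will be the middle step --- exhibiting the crossed intermediate subfactors $P,Q$ and verifying $P \vee Q = \widehat M$ and $P \cap Q = N$. This is precisely where the special $\alpha$-twisted structure of $A$ is essential: the mere existence of $AH+2$ only supplies the tower $R \subset M \subset \widehat M$ and its standard intermediate subfactor $M$, which is of the wrong type, and not the crossed pair. I expect this to reduce almost verbatim to the argument of \cite{MR2418197,MR2812458} once the correspondence is in place, since the self-duality of $AH+2$ and the order-two nature of $\alpha$ make the underlying fusion bookkeeping identical to the $AH\to AH+1$ situation.
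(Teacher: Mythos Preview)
Your proposal is correct and is precisely the argument the paper has in mind: the corollary is stated in the paper without proof, as an immediate consequence of the Q-system for $1+\bar\kappa\alpha\kappa$ via the quadrilateral/Q-system correspondence of \cite{MR2418197,MR2812458}, and you have simply unpacked that correspondence. The only comment is that your write-up is considerably more detailed than anything the paper supplies---there is no ``paper's own proof'' to compare against beyond the implicit appeal to the cited references---so your sketch is in fact an expansion rather than an alternative.
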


\section{A different proof for the existence of $AH+1$ and $AH+2$}
A new construction of the Asaeda-Haagerup subfactor was given in \cite{AHCat2}. First a new subfactor with index $5+\sqrt{17} $ and principal graph $$\hpic{newgraph} {1.2in}  ,$$ which we call $2AH$,  was constructed from endomorphisms of a Cuntz algebra. Then it was shown that the dual even part of this subfactor has the same fusion rules as the principal even part of the Asaeda-Haagerup subfactor. Finally, because this dual even part contains a self-dual simple object $\rho $ satisfying $\rho^2=1+\rho+\pi $, with $\pi $ irreducible, $1+\rho $ must admit a Q-system by the recognition theorem \cite[Theorem 3.4]{GSbp}. The Q-system for $1+\rho$ gives a subfactor with the Asaeda-Haagerup principal graph (by uniqueness of the connection for this graph, it is the same subfactor constructed by Asaeda and Haagerup).

The condition $\rho^2=1+\rho+\pi $ corresponds to $4$-supertransitivity of the Asaeda-Haagerup subfactor - this means that the principal graph has a single branch of at least $4$ edges emanating from the vertex labeled by $1$ before any branching out occurs. (The Asaeda-Haagerup subfactor is in fact $5$-supertransitive). The $AH+1$ and $AH+2$ subfactors are only $3$-supertransitive, so the recognition theorem of \cite{GSbp} does not apply. Nevertheless, it is possible to deduce the existence of $AH+1$ and $AH+2$ from the existence of $2AH$ and $AH$ using combinatorics of the Brauer-Picard groupoid.

In \cite{AHCat2}, the Brauer-Picard groupoid of the Asaeda-Haagerup fusion categories was described. There are six different fusion categories in the Morita equivalence class, denoted in caligraphic font as $\mathcal{AH}_1-\mathcal{AH}_6 $.

 The relationship of these categories to the small-index subfactors is as follows: $\mathcal{AH}_1$ is the common dual even part of the $AH$, $AH+1$, and $AH+2 $ subfactors, $\mathcal{AH}_2$ is the principal even part of the $AH$ subfactor, $\mathcal{AH}_3$ is the principal even part of $AH+1 $, and $\mathcal{AH}_4$ is the principal even part of the new $2AH$ subfactor. The principal even part of $AH+2$ is also $\mathcal{AH}_1 $. This information is summarized in Figure \ref{smallind}.

\begin{figure}
\begin{centering}
\begin{tikzpicture}

\draw[<->,thick] (0,0.4) --(0,3.6); 
\draw[<->,thick] (0.6,4) --(3.6,4);
\draw[<->,thick] (0.6,0) --(3.6,0);
\node at (0,4) {$\mathcal{AH}_1$};
\node at (0,0) {$\mathcal{AH}_2$};
\node at (4.2,4) {$\mathcal{AH}_3$};
\node at (4.2,0) {$\mathcal{AH}_4$};
\node at (2.2,4.3) {$AH+1$};
\node at (2.2,-0.3) {$2AH$};
\node at (-0.5,2.3) {$AH$};
\draw[<->,thick] (0,4.4) arc (0:330:1.3); 
\node at (-3.5,4.5) {$AH+2$};
\end{tikzpicture}
\caption{Some small index subfactors in the Brauer-Picard groupoid}
\label{smallind}
\end{centering}
\end{figure}
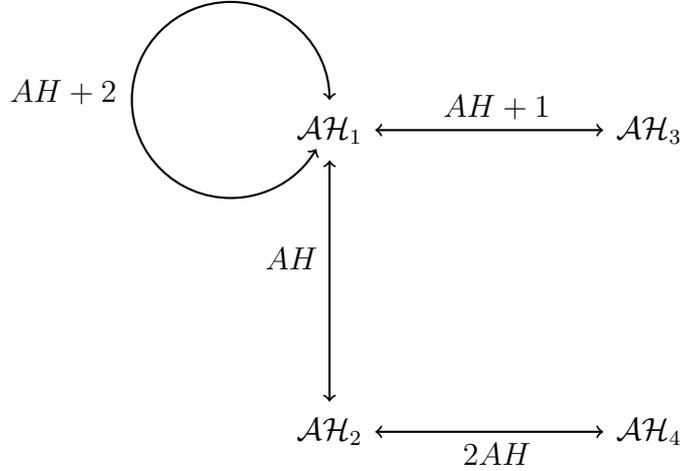
The Brauer-Picard group is $\mathbb{Z}_2 \times \mathbb{Z}_2 $, and all four invertible $\mathcal{AH}_i-\mathcal{AH}_j $ bimodule categories for $1 \leq i,j \leq 3 $ were described in \cite{GSbp}. However the calculations behind these results used the existence of $AH+1$ and $AH+2$, and preceded the discovery of $2AH$.


Now we will only assume the existence of $2AH$, and as a consequence $AH$. Then we have three fusion categories which arise as even parts of these two subfactors, namely, $\mathcal{AH}_1$, $\mathcal{AH}_2$, and $ \mathcal{AH}_4 $. 

The fusion category $ \mathcal{AH}_4 $ contains eight simple objects. There is a tensor subcategory equivalent to $\text{Vec}_{\mathbb{Z}_4 } $, with simple objects $\alpha_i, \ i \in \mathbb{Z}_4$ and a simple object $\xi $ satisfying 
$$\alpha_i \xi=\xi \alpha_{-i}, \quad \xi^2=1+2 \sum_{i \in \mathbb{Z}_4} \limits \alpha_{i} \xi.$$
We have $$dim(\xi)=d:=4+\sqrt{17} .$$
There is a Q-system for $1+\xi$, which gives the $2AH$ subfactor. The corresponding dual Q-system in $\mathcal{AH}_2 $ is $1+\alpha \pi $ (where we use the labeling from Figure \ref{ahgraphs}.) 

\begin{lemma}
The Brauer-Picard group of the Asaeda-Haagerup fusion categories contains $\mathbb{Z}_2 \times \mathbb{Z}_2 $.
\end{lemma}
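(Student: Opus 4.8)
The plan is to produce the required copy of $\mathbb{Z}_2 \times \mathbb{Z}_2$ entirely from tensor autoequivalences, using the fact recorded in Section 2 that the outer automorphism group $\mathrm{Out}(\mathcal{C})$ of a fusion category $\mathcal{C}$ embeds as a subgroup of its Brauer--Picard group. Since the Brauer--Picard group is a Morita invariant, I may compute it using whichever representative of the Asaeda--Haagerup Morita class is most convenient, and $\mathcal{AH}_4$, the principal even part of $2AH$, is the natural choice because its fusion rules have just been written down explicitly. So it suffices to exhibit a subgroup $\mathbb{Z}_2 \times \mathbb{Z}_2 \subseteq \mathrm{Out}(\mathcal{AH}_4)$.

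First I would determine the group of fusion-ring automorphisms of $\mathcal{AH}_4$. Any tensor autoequivalence permutes the invertible objects, hence restricts to a group automorphism of the group of invertibles $\{\alpha_i\} \cong \mathbb{Z}_4$, which is either the identity or the inversion $\alpha_i \mapsto \alpha_{-i}$; and it permutes the four objects $\xi\alpha_i$ of dimension $d$. Compatibility with the relation $\alpha_j(\xi\alpha_i) = \xi\alpha_{i-j}$ (a consequence of $\alpha_i\xi = \xi\alpha_{-i}$) forces the permutation of the $\xi\alpha_i$ to be affine, $\xi\alpha_i \mapsto \xi\alpha_{ci+b}$, with the same unit $c \in \mathbb{Z}_4^{\times}$ governing the action on the $\alpha_i$ and an arbitrary translation $b \in \mathbb{Z}_4$. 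Thus the fusion-ring automorphism group is the affine group $\mathbb{Z}_4 \rtimes \mathbb{Z}_4^{\times}$, the dihedral group of order $8$. Conjugation by the invertible $\alpha_k$ fixes each $\alpha_j$ and sends $\xi\alpha_i \mapsto \xi\alpha_{i-2k}$, so the inner automorphisms are exactly the central subgroup $\{(1,0),(1,2)\} \cong \mathbb{Z}_2$. Quotienting, the outer fusion-automorphism group is already exactly $\mathbb{Z}_2 \times \mathbb{Z}_2$ (more than the lemma requires, which only asks for containment), generated for instance by the classes of the inversion $(c,b)=(3,0)$ and the unit translation $(c,b)=(1,1)$; these are distinct modulo the inner subgroup, each of order two in the quotient, and commute there since the quotient is abelian.

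The remaining, genuinely categorical step is to check that these fusion symmetries are realized by actual tensor autoequivalences and not merely by ring automorphisms: a priori there can be an obstruction to lifting, and this is the only part of the argument that uses more than combinatorics. Here I would appeal to the explicit construction of $2AH$ in \cite{AHCat2}, whose symmetries realize the inversion and translation above at the categorical level; equivalently, the outer automorphisms of $\mathcal{AH}_4$ are precisely the input this section is permitted to assume. Granting the lift, the two classes give commuting order-two outer automorphisms of $\mathcal{AH}_4$ generating $\mathbb{Z}_2 \times \mathbb{Z}_2$, and under $\mathrm{Out}(\mathcal{AH}_4) \hookrightarrow \mathrm{BrPic}$ this is the desired subgroup. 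The hard part will be exactly this categorical realization; the rest is bookkeeping with the affine group of $\mathbb{Z}_4$ and its center.
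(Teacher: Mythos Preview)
Your overall strategy---produce $\mathbb{Z}_2 \times \mathbb{Z}_2$ inside $\mathrm{Out}(\mathcal{AH}_4)$ and then use $\mathrm{Out}\hookrightarrow\mathrm{BrPic}$---is exactly the paper's, and your deferral of the categorical realization to \cite{AHCat2} is likewise what the paper does. So at the level of architecture you are right.

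The gap is in the middle step. You compute the outer fusion-ring automorphism group of $AH_4$ (correctly: it is the dihedral group modulo its center, so $\mathbb{Z}_2\times\mathbb{Z}_2$) and then assert that \cite{AHCat2} lifts your two chosen generators, inversion and unit translation, to tensor autoequivalences. That is not what \cite{AHCat2} provides. The two outer automorphisms of $\mathcal{AH}_4$ actually constructed there are (i) one that exchanges $\xi$ with $\alpha_1\xi$, and (ii) one that fixes every simple object up to isomorphism but swaps the two inequivalent Q-system structures on $1+\xi$. Generator (ii) acts \emph{trivially} on the Grothendieck ring, so the $\mathbb{Z}_2\times\mathbb{Z}_2$ that \cite{AHCat2} hands you has image only $\mathbb{Z}_2$ in your fusion-ring quotient; it is \emph{not} a lift of the $\mathbb{Z}_2\times\mathbb{Z}_2$ you computed, and in particular nothing in \cite{AHCat2} is claimed to realize the inversion $(c,b)=(3,0)$ categorically. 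Your argument therefore does not go through as written: you have an upper bound on the fusion-visible part of $\mathrm{Out}$, but the second $\mathbb{Z}_2$ you need lives in the kernel of the map to fusion-ring automorphisms and is invisible to your analysis.

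The repair is to drop the fusion-ring detour entirely and simply invoke, as the paper does, the explicit $\mathbb{Z}_2\times\mathbb{Z}_2$-graded extension of $\mathcal{AH}_4$ built in \cite{AHCat2} via an enlarged Cuntz algebra, with the two generators described above. Your affine-group computation is correct but not needed, and it led you to misidentify which outer automorphisms are actually available.
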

\begin{proof}
In fact as noted above, the Brauer-Picard group of the Asaeda-Haagerup fusion categories is isomorphic to $\mathbb{Z}_2 \times \mathbb{Z}_2 $, which is \cite[Theorem 6.7(b)]{GSbp}. However, the proof there used the existence of $AH+1$ and $AH+2$, so we need a different approach here. 

The construction of $2AH$ in \cite{AHCat2}  proceeded by explicitly constructing the endomorphisms $\alpha_i $ and $\xi $ in $\mathcal{AH}_4 $ on the von Neumann algebra closure of a Cuntz algebra and then verifying that $1+\xi $ admits a Q-system. 

However, there are actually two inequivalent Q-systems for $1+\alpha_i \xi $ for each $i$. By enlarging the Cuntz algebra, one can explicitly construct a graded extension of $ \mathcal{AH}_4$ by $\mathbb{Z}_2 \times \mathbb{Z}_2 $ generated by an outer automorphism which switches the two Q-systems for $1+\xi$ and an outer automorphism which switches $ \xi$ and $\alpha_1 \xi $. (This result was announced in \cite{AHCat2} although the details of the construction do not appear there.) 
\end{proof}

In the following lemmas, we will need to perform some combinatorial calculations in the Brauer-Picard groupoid, following the methods of \cite{GSbp}. We briefly explain some of the notation from there, which we also employ here. The Grothendieck ring of each fusion category $\mathcal{AH}_4 $ is denoted by $AH_i$ (not to be confused with $AH$, $AH+1$, and $AH+2$, which refer to subfactors). 

In an arXiv supplement to \cite{GSbp} (see arxiv:1202.4396),  there are text files \textit{AH1Modules}, \textit{AH2Modules},  and \textit{AH3Modules}, which give lists of (right) fusion modules over the fusion rings $AH_1 $, $AH_2$, and $AH_3$, respectively. Each fusion module is given as a list of non-negative integer matrices. The $ij^{th} $ entry of the $k^{th}$ matrix gives $(\kappa_k \xi_i,\kappa_j) $, where the $\xi_i $ are the basis elements of the fusion ring and the $\kappa_j $ are the basis elements of the fusion module.

We use $n_j $ to refer to the $n^{th} $ fusion module on the list of fusion modules for $AH_i$. For example, $16_2 $ refers to the $16^{th}$ fusion module on the list of $AH_2$ fusion modules given in the file \textit{AH2Modules}. The text file \textit{Bimodules}, also in the arXiv supplement, gives lists of $AH_i-AH_j$ fusion bimodules for each $1 \leq i,j \leq 3 $. The bimodule $n_{ij} $ refers to the $n^{th} $ bimodule on the list of $AH_i-AH_j$ bimodules in the file \textit{Bimodules}.

We say that a fusion module $n_i$ is realized if there is a $\mathcal{AH}_i$ module category whose fusion module is $n_i $, and $n_i$ is realized uniquely if there is a unique such module category; and similarly for fusion bimodules and bimodule categories. 

If a fusion module $n_i $ is realized by a module category $\mathcal{M}_{ \mathcal{AH}_i}$, then one can read from the data of $n_j$ the list of objects which have algebra/Q-system structures whose categories of modules are equivalent to $\mathcal{M} $ - such objects are described by the $j^{th} $ columns of the $ j^{th}$ matrices in the list of matrices for $ n_i$. We will say that a Q-system $\gamma \in \mathcal{AH}_i $ is associated to a fusion module $n_i$, or vice versa, if the module category of $\gamma$ realizes $n_i $. Similarly, $\gamma $ is associated to $n_{ji} $ if the bimodule category of $\gamma $ realizes $n_{ji} $. 

For the small index subfactors, it is easy to see which fusion modules they correspond to. For example, the subfactor $AH$ corresponds to a Q-system for $1+\rho$ in $\mathcal{AH}_2 $, so we look for a fusion module over $AH_2$ which has a matrix containing a column with $1$'s as the entries corresponding to the basis elements for $1$ and $ \rho$ and with $0$'s for the other entries; the only such fusion module is $16_2$.

The key idea in the following calculations is the notion of multiplicative compatibility of bimodule categories. A triple $(l_{ij},m_{jk},n_{ik} ) $ is said to be multiplicatively compatible if $n_{ik}$ passes certain combinatorial obstruction tests for being realized by the tensor product of bimodule categories realizing $l_{ij} $ and $m_{jk} $ (see \cite{GSbp} for details). The file \textit{BimoduleCompatibility} gives, for each pair $(l_{ij},m_{jk}) $ with $1 \leq i,j,k \leq 3 $, the set of $AH_i-AH_k $ fusion bimodules which form compatible triples with the pair. 

The notation used is $$l_{ij} \cdot m_{jk}=\{x_{ik},y_{ik},...\} .$$ If the right hand side is a singleton set, we say that $l_{ij} \cdot m_{jk}$ is a unique multiplication and suppress the braces. There are certain obvious facts that can be deduced from the multiplicative compatibility tables. For example, if  $l_{ij} \cdot m_{jk}=\{ \}$ then $l_{ij}$ and $m_{jk} $ cannot both be realized. If 
$l_{ij} \cdot m_{jk}=n_{ik}$ is a unique multiplication and $l_{ij}$ and $m_{jk} $ are both realized then so is $n_{ik}$. 

We wll also need the fusion bimodule lists and multiplicative compatibility rules for $AH_1-AH_4 $ and $AH_4-AH_1$ bimodules; we include these in the arXiv submission in the text file \textit{AH1-AH4\_Bimodules}.

\begin{lemma}
There are two invertible $\mathcal{AH}_1-\mathcal{AH}_4 $ bimodule category realizing the fusion bimodule $8_{14} $, and two realizing the fusion bimodule $9_{14} $. 
\end{lemma}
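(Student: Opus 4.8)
The plan is to realize the desired $\mathcal{AH}_1$-$\mathcal{AH}_4$ bimodule categories by composing the Morita equivalences we already have, and then to count them using the torsor structure of the Brauer-Picard groupoid over the group $\mathbb{Z}_2 \times \mathbb{Z}_2$ established in the previous lemma.

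First I would construct an explicit invertible bimodule category. The subfactor $AH$ gives an invertible $\mathcal{AH}_1$-$\mathcal{AH}_2$ bimodule category, and $2AH$ gives an invertible $\mathcal{AH}_2$-$\mathcal{AH}_4$ bimodule category, since its two even parts are $\mathcal{AH}_2$ and $\mathcal{AH}_4$; both exist under our standing hypotheses. Their relative tensor product over $\mathcal{AH}_2$ is then an invertible $\mathcal{AH}_1$-$\mathcal{AH}_4$ bimodule category. To identify the fusion bimodule it realizes, I would decategorify and run the multiplicative compatibility test of \cite{GSbp}: I take the $AH_1$-$AH_2$ and $AH_2$-$AH_4$ fusion bimodules associated to $AH$ and $2AH$ and read off their product from the supplementary file \textit{AH1-AH4\_Bimodules}. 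I expect this product to be a unique multiplication landing on (say) $8_{14}$, which produces one realization. A second realization of the \emph{same} fusion bimodule is then obtained by twisting on the $\mathcal{AH}_4$ side by the order-two automorphism that switches the two Q-systems for $1+\xi$; this automorphism acts trivially on the Grothendieck ring, so it preserves the fusion bimodule while producing an inequivalent bimodule category.

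For the count I would use that the set of invertible $\mathcal{AH}_1$-$\mathcal{AH}_4$ bimodule categories is a torsor over the Brauer-Picard group, which by the previous lemma contains (and in fact equals) $\mathbb{Z}_2 \times \mathbb{Z}_2$; hence there are exactly four such categories. The decategorification homomorphism $BP \to \mathrm{Aut}(K_0(\mathcal{AH}_4))$ sends the first generator (switching Q-systems) to the identity and sends the second generator (swapping $\xi$ and $\alpha_1\xi$) to a nontrivial ring automorphism, so its kernel has order two. The free right action of this order-two kernel therefore partitions the four bimodule categories into two orbits of size two, each orbit realizing a single fusion bimodule, while the second generator interchanges the two orbits and carries $8_{14}$ to the other realized fusion bimodule, which I would identify as $9_{14}$ by the same compatibility file. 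This yields exactly two realizations of $8_{14}$ and two of $9_{14}$.

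The main obstacle is the bookkeeping in the two identification steps: pinning down precisely which entries of \textit{AH1-AH4\_Bimodules} the composite and its $g_2$-twist realize, and confirming that the twist by the Q-system-switching automorphism yields an inequivalent rather than equivalent bimodule category, so that the multiplicity is genuinely two. The former is a finite combinatorial check against the supplementary data; the latter reduces to verifying that this automorphism is a nontrivial element of the Brauer-Picard group, which acts freely on the torsor of bimodule categories.
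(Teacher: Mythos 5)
Your proposal follows the same skeleton as the paper's proof: compose the $\mathcal{AH}_1$--$\mathcal{AH}_2$ Morita equivalence from $AH$ with the $\mathcal{AH}_2$--$\mathcal{AH}_4$ Morita equivalence from $2AH$, identify the resulting fusion bimodule as one of $8_{14}$, $9_{14}$, and then act by the four outer automorphisms of $\mathcal{AH}_4$, using that the two automorphisms trivial on the Grothendieck ring preserve the fusion bimodule while the two sending $\xi \mapsto \alpha_1\xi$ interchange $8_{14}$ and $9_{14}$, with freeness of the action giving the multiplicities. That is exactly the paper's argument, so the core of your proof is sound.

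Two points of difference deserve comment. First, the identification step: the paper does not use multiplicative compatibility here. It observes that $(1+\rho,\,1+\alpha\pi)=1$, so the composite Morita equivalence is associated to a Q-system of dimension $\dim(1+\rho)\dim(1+\alpha\pi)=\frac{1+d}{2}(1+d)=1+5d$, and then inspects the list of $AH_1$--$AH_4$ fusion bimodules to see that only $8_{14}$ and $9_{14}$ admit a Q-system of that dimension. Your route would instead require the $AH_2$--$AH_4$ fusion bimodule lists and their products against $AH_1$--$AH_2$ bimodules, data which is not among the paper's supplementary files (the file \textit{AH1-AH4\_Bimodules} covers only $AH_1$--$AH_4$ and $AH_4$--$AH_1$ bimodules and their mutual products); moreover your hope for a unique multiplication is optimistic, since the $\xi \leftrightarrow \alpha_1\xi$ symmetry makes it likely that the compatibility set contains both $8_{14}$ and $9_{14}$. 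This is not fatal --- your argument, like the paper's, only needs the composite to land in the two-element candidate set --- but the dimension count is both cleaner and runs on available data. Second, your counting step overreaches: at this stage of the paper's second proof one may only use that the Brauer-Picard group \emph{contains} $\mathbb{Z}_2\times\mathbb{Z}_2$; the equality cited from \cite{GSbp} was proved there using the existence of $AH+1$ and $AH+2$, which is precisely what this section is establishing, so invoking it is circular. Dropping the equality, the free action of the order-four subgroup still partitions four distinct twists into two pairs, giving \emph{at least} two realizations of each of $8_{14}$ and $9_{14}$ --- which is what the paper actually concludes and all that the subsequent lemmas require --- but not the ``exactly two'' your write-up claims.
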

\begin{proof}
There is an invertible $\mathcal{AH}_1-\mathcal{AH}_2 $ bimodule category associated to a Q-system for $1+\rho$ in $\mathcal{AH}_2 $ (coming from the subfactor $AH$), and an invertible $\mathcal{AH}_2-\mathcal{AH}_4 $ bimodule category associated to a Q-system for $1+\alpha \pi$ in $\mathcal{AH}_2 $ (coming from the subfactor $2AH$). Since $(1+\rho,1+\alpha \pi) =1$, this means that there is an invertible $\mathcal{AH}_1-\mathcal{AH}_4 $ bimodule category associated to a Q-system of dimension $dim(1+\rho)dim(1+\alpha\pi)=\frac{1+d}{2}(1+d) =1+5d$. 
By inspecting the list of $\mathcal{AH}_1-\mathcal{AH}_4 $ bimodules, we see that the only two candidates are $8_{14} $ and $9_{14} $.
Also, since two of the four outer automorphisms of $\mathcal{AH}_4 $ fix $\rho $ and two send $\rho $ to $\alpha_1 \rho $, any bimodule category realizing $8_{14} $ is sent to a bimodule category realizing $8_{14}$ by two of the four outer automorphisms, and is sent to a bimodule category realizing $9_{14} $ by the other two outer automorphisms; and similarly for bimodule categories realizing $9_{14} $. So there must be at least two invertible $\mathcal{AH}_1-\mathcal{AH}_4 $ realizing each of $8_{14} $ and $9_{14}$. 
\end{proof}

By considering the opposite bimodule categories, we see that there are also two $\mathcal{AH}_4-\mathcal{AH}_1 $ realizing each of $8_{41} $ and $9_{41}$. 
\begin{lemma}
There are invertible bimodule categories realizing $10_{11} $, $12_{11} $, $13_{11}$, and $14_{11}$.
\end{lemma}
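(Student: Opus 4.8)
The plan is to produce these four invertible $\mathcal{AH}_1-\mathcal{AH}_1$ bimodule categories as composites of the invertible $\mathcal{AH}_1-\mathcal{AH}_4$ and $\mathcal{AH}_4-\mathcal{AH}_1$ bimodule categories produced in the previous lemma, and then to read off which $AH_1-AH_1$ fusion bimodules these composites realize using the multiplicative compatibility data in the supplementary file \textit{AH1-AH4\_Bimodules}.

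First I would recall that the previous lemma gives invertible bimodule categories realizing $8_{14}$ and $9_{14}$, and that passing to opposite bimodule categories gives invertible bimodule categories realizing $8_{41}$ and $9_{41}$. Since the relative tensor product of two Morita equivalences is again a Morita equivalence, each of the four composites $8_{14}\cdot 8_{41}$, $8_{14}\cdot 9_{41}$, $9_{14}\cdot 8_{41}$, and $9_{14}\cdot 9_{41}$ is realized by an \emph{invertible} $\mathcal{AH}_1-\mathcal{AH}_1$ bimodule category. (It suffices to use a single realization of each factor, so the multiplicity of realizations in the previous lemma is not needed here.)

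Next I would evaluate these four products at the decategorified level using the $AH_1-AH_4$ and $AH_4-AH_1$ multiplicative compatibility rules. The expectation is that each of the four products is a unique multiplication, and that the four resulting $AH_1-AH_1$ fusion bimodules are exactly $10_{11}$, $12_{11}$, $13_{11}$, and $14_{11}$ (in some order). Invoking the principle stated above that a unique multiplication of two realized bimodule categories is itself realized, this identifies each of the four target fusion bimodules as realized, and the invertibility is automatic since the realizing categories are composites of Morita equivalences.

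The main obstacle is the bookkeeping in the compatibility tables, and specifically two points that must be checked cleanly: (i) that each of the four composites is a unique multiplication, so that realization transfers via the stated principle rather than merely producing a set of candidates; and (ii) that the four composites hit four \emph{distinct} targets rather than collapsing onto fewer. As a consistency check, and as a fallback should some product fail to be unique, I would use the fact established above that the Brauer-Picard group contains $\mathbb{Z}_2\times\mathbb{Z}_2$: this forces exactly four invertible $\mathcal{AH}_1-\mathcal{AH}_1$ bimodule categories, and the outer-automorphism action of $\mathcal{AH}_4$ that interchanges $8_{14}\leftrightarrow 9_{14}$ (and $8_{41}\leftrightarrow 9_{41}$) permutes the four composites in a controlled way, which together with a dimension count on the associated Q-systems should pin down any ambiguous product.
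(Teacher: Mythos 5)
Your plan breaks down at its central step: the claim that each of the four composites $8_{14}\cdot 8_{41}$, $8_{14}\cdot 9_{41}$, $9_{14}\cdot 8_{41}$, $9_{14}\cdot 9_{41}$ is a unique multiplication is false. The actual compatibility data (from the file \textit{AH1-AH4\_Bimodules}, quoted in the paper) is $8_{14}\cdot 8_{41}=\{12_{11},14_{11}\}$ and $8_{14}\cdot 9_{41}=\{8_{11},10_{11},13_{11}\}$, so the decategorified product only tells you that each composite realizes \emph{some} member of a two- or three-element set. Realization therefore does not transfer by the ``unique multiplication'' principle, and nothing in your argument excludes the possibility that a composite lands on $8_{11}$ rather than on $10_{11}$ or $13_{11}$. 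The dimension count you offer as a fallback cannot separate these candidates either: every fusion bimodule on a compatibility list already passes the dimension and combinatorial obstructions, which is how the list was produced in the first place.

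What is actually needed (and what the paper does) is the following chain: (a) $14_{11}$ is realized uniquely, by the trivial bimodule category, because $\mathcal{AH}_1$ has no outer automorphisms; (b) since $a_{11}\cdot a_{11}$ is compatible only with $14_{11}$ for $a=8,10,12,13$, each of these four fusion bimodules is realized at most once, by \cite[Lemma 6.4]{GSbp}; (c) hence the nontrivial element of the $\mathbb{Z}_2\times\mathbb{Z}_2$ subgroup whose fusion bimodule lies in $\{12_{11},14_{11}\}$ must realize $12_{11}$; and (d) --- the step your proposal is missing entirely --- $8_{11}$ must be \emph{ruled out}: if $8_{11}$ were realized, then $12_{11}\cdot 8_{11}=\{8_{11},10_{11}\}$ together with uniqueness would force $10_{11}$ to be realized, then $12_{11}\cdot 10_{11}=\{13_{11}\}$ would force $13_{11}$ as well, and one checks that no group structure on any four of the five candidate fusion bimodules is consistent with these multiplication constraints, a contradiction. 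Only after eliminating $8_{11}$ can one conclude that the two remaining subgroup elements realize $10_{11}$ and $13_{11}$. So while you compose the right bimodule categories, your argument as it stands does not identify which fusion bimodules the composites realize, and that identification is the whole content of the lemma.
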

\begin{proof}
Looking at the multiplicative compatibility lists for $AH_1-AH_4$ fusion bimodules with 
$AH_4-AH_1$ fusion bimodules in the file \textit{AH1-AH4\_Bimodules}, we find that

$$8_{14}  \cdot  8_{41} = \{ 12_{11} , 14_{11} \} , \quad
8_{14} \cdot 9_{41} = \{ 8_{11} , 10_{11} , 13_{11} \}  .$$

This means that that there is a subgroup of the Brauer-Picard group of $\mathcal{AH}_1 $ which is isomorphic to $\mathbb{Z}_2 \times \mathbb{Z}_2$ and contains two bimodule categories realizing fusion bimodules from the set
 $  \{ 12_{11} , 14_{11} \} $ and two bimodule categories realizing bimodules from the set 
$  \{ 8_{11} , 10_{11} , 13_{11} \}$.

The fusion bimodule $14_{11}$ is realized uniquely by the trivial bimodule category (which is the identity in the Brauer-Picard groupoid), since $\mathcal{AH}_1 $ has no outer automorphisms by the argument in \cite{GSbp}.  
Looking at the multiplicative compatibility lists for $\mathcal{AH}_1-\mathcal{AH}_1 $ bimodule categories (in the file \textit{BimoduleCompatibility} from \cite{GSbp}), we see that $a_{11}\cdot a_{11} $ for $a=8,10,12,13$ is compatible only with $14_{11}$, so each of the fusion bimodules $12_{11}$, $10_{11}$ ,$12_{11}$, $13_{11}$, if realized, is realized uniquely by \cite[Lemma 6.4]{GSbp}. Also, $12_{11}$ is realized since it is the only other member of the multiplicative compatibility list for $8_{14}\cdot 8_{41} $ alongside $14_{11}$.

Finally, we show that $8_{11}$ cannot be realized. The multiplicative compatiblity list for $12_{11}\cdot 8_{11} $ is $\{ 8_{11}, 10_{11} \}  $. So if $8_{11}$ is realized, then since it is necessarily realized uniquely, then the bimodule categories realizing $12_{11}$ and $8_{11}$  have a tensor product realizing $10_{11}$. Then 
since the multiplicative compatiblity list for $12_{11}\cdot 10_{11} $ is $\{13_{11}\}$, the fusion bimodule $13_{11}$ is realized as well. But there is no group structure on any order $4$ subset of the $5$ fusion bimodules which is compatible with these multiplication constraints. So $8_{11}$ is not realized, and the other four are.
\end{proof}

\begin{corollary}
The subfactor $AH+2$ exists.
\end{corollary}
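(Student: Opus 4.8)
The plan is to recognize $AH+2$ as one of the nontrivial elements of the $\mathbb{Z}_2 \times \mathbb{Z}_2$ subgroup of the Brauer-Picard group of $\mathcal{AH}_1$ whose realizability was just established in the preceding lemma. Recall that $AH+2$ is the subfactor attached to an irreducible Q-system for $1+\bar{\kappa}\alpha\kappa$ inside $\mathcal{AH}_1$, and that both its principal and dual even parts are $\mathcal{AH}_1$; hence $AH+2$ corresponds precisely to an invertible $\mathcal{AH}_1-\mathcal{AH}_1$ bimodule category, i.e. a Morita autoequivalence of $\mathcal{AH}_1$. Conversely, in the framework used throughout this section, any invertible $\mathcal{AH}_1-\mathcal{AH}_1$ bimodule category is implemented by a Q-system in $\mathcal{AH}_1$ and therefore produces a subfactor with both even parts equal to $\mathcal{AH}_1$. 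So it suffices to show that the $AH_1-AH_1$ fusion bimodule $n_{11}$ associated to the Q-system $1+\bar{\kappa}\alpha\kappa$ is realized.

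First I would pin down which fusion bimodule on the $AH_1-AH_1$ list is the relevant one. Using the dictionary recalled above between columns of the fusion-module matrices and Q-system structures, together with the dimension count $\dim(1+\bar{\kappa}\alpha\kappa)=\tfrac{9+\sqrt{17}}{2}=\tfrac{5+d}{2}$, one reads off the fusion bimodule whose associated Q-system has underlying object $1+\bar{\kappa}\alpha\kappa$. Since the identity $14_{11}$ has index $1$ and $8_{11}$ was shown not to be realized, this bimodule must be one of the realized nontrivial members $10_{11}$, $12_{11}$, $13_{11}$ of the $\mathbb{Z}_2\times\mathbb{Z}_2$ subgroup produced in the preceding lemma. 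Here it is important to match the actual simple decomposition $1+\bar{\kappa}\alpha\kappa$ recorded in the fusion-module data, not merely its total dimension, so that the Q-system read off is genuinely the $AH+2$ one.

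Having made the identification, the corollary follows at once: by the preceding lemma the matched fusion bimodule is realized by an invertible $\mathcal{AH}_1-\mathcal{AH}_1$ bimodule category, whence the corresponding irreducible Q-system for $1+\bar{\kappa}\alpha\kappa$ exists, and the associated subfactor has index $\tfrac{9+\sqrt{17}}{2}$ together with the principal and dual graphs computed in Section~4; by uniqueness of the connection on this graph pair it is the $AH+2$ subfactor.

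I expect the main obstacle to be the bookkeeping in the identification step, namely correctly matching the abstract Q-system $1+\bar{\kappa}\alpha\kappa$ to a specific fusion bimodule in the supplementary $AH_1-AH_1$ data and confirming that the realized bimodule category yields an irreducible Q-system on exactly this object rather than on some other object of the same total dimension. Once this matching is verified, no further categorical or analytic input is required, since all realizability has already been secured combinatorially in the preceding lemma from the existence of $2AH$ and $AH$ alone.
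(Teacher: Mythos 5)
Your proposal follows the paper's proof essentially verbatim: the paper's entire argument is the assertion that $AH+2$ corresponds to a Q-system associated to a bimodule category realizing the fusion bimodule $12_{11}$, which the preceding lemma shows is realized --- so the identification you defer to ``bookkeeping'' is exactly the lookup that lands on $12_{11}$, and the rest of your outline (Q-system for $1+\bar{\kappa}\alpha\kappa$ in $\mathcal{AH}_1$ $\leftrightarrow$ Morita autoequivalence $\leftrightarrow$ realized invertible fusion bimodule) is the intended reading of that one-line proof. One caution: your interim step excluding $8_{11}$ ``because it was shown not to be realized'' is circular --- non-realizability of $8_{11}$ cannot be used to locate the $AH+2$ bimodule, since if the column data for $1+\bar{\kappa}\alpha\kappa$ had matched $8_{11}$ the strategy would simply collapse rather than be rerouted to a realized bimodule; the narrowing must be done, as you yourself say at the end, by matching the simple decomposition of $1+\bar{\kappa}\alpha\kappa$ against the fusion bimodule data, and it is only because that match is $12_{11}$ (one of the four realized ones) that the preceding lemma yields existence.
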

\begin{proof}
The $AH+2$ subfactor corresponds to a Q-system associated to a bimodule category realizing the fusion bimodule $12_{11}$.
\end{proof}

\begin{lemma}
There is a fusion category $\mathcal{AH}_3 $ with Grothendieck ring $AH_3$, and an $\mathcal{AH}_2-\mathcal{AH}_3 $-bimodule category realizing $ 6_{23}$. 
\end{lemma}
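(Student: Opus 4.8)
The plan is to produce $\mathcal{AH}_3$ directly, as the dual category of an explicit Q-system living in the one Asaeda--Haagerup category we already control completely, namely $\mathcal{AH}_4$, and then to obtain the bimodule category realizing $6_{23}$ by composing this new Morita equivalence with the one coming from $2AH$. The reason this is the natural route is that multiplicative compatibility can never introduce a genuinely new endpoint category: products of $AH_1$--$AH_2$--$AH_4$ fusion bimodules stay among $\{1,2,4\}$, so $\mathcal{AH}_3$ has to be manufactured by hand before any combinatorics involving $AH_3$ can be run.

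First I would exploit the fact that $\mathcal{AH}_4$ contains the pointed subcategory $\text{Vec}_{\mathbb{Z}_4}$ generated by the $\alpha_i$. Any algebra supported on a pointed subcategory is group-theoretical and therefore automatically carries a Q-system structure, with no connection or flatness input required; the natural candidate is the group algebra of the order-two subgroup $\{\alpha_0,\alpha_2\}$, giving the Q-system $1+\alpha_2$ (of dimension $2$) in $\mathcal{AH}_4$. Its module category $\mathcal{M}=\mathrm{Mod}_{1+\alpha_2}$ is a module category over $\mathcal{AH}_4$, and the dual category $(\mathcal{AH}_4)^{*}_{\mathcal{M}}$ is a genuine fusion category Morita equivalent to $\mathcal{AH}_4$. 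Because the fusion rules of $\mathcal{AH}_4$ (the relations $\alpha_i\xi=\xi\alpha_{-i}$ and $\xi^2=1+2\sum_i\alpha_i\xi$) are completely explicit, the Grothendieck ring of this dual category, as well as the fusion module of $\mathcal{M}$ over $AH_4$, can be computed purely combinatorially.

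The decisive step is then to check that this Grothendieck ring is exactly $AH_3$. I would verify this by matching Frobenius--Perron dimensions and fusion coefficients against $AH_3$ (equivalently, against the principal graph data of $AH+1$); the total Frobenius--Perron dimension is automatically preserved under Morita equivalence, so the real content is in matching the individual simple objects and their products. This identification certifies the existence of a fusion category $\mathcal{AH}_3$ with Grothendieck ring $AH_3$, and it simultaneously exhibits $\mathcal{M}$ as an invertible $\mathcal{AH}_4-\mathcal{AH}_3$ bimodule category realizing a definite $AH_4-AH_3$ fusion bimodule.

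Finally, to obtain the required $\mathcal{AH}_2-\mathcal{AH}_3$ bimodule category I would compose the $2AH$ Morita equivalence (an invertible $\mathcal{AH}_2-\mathcal{AH}_4$ bimodule category, coming from the Q-system $1+\alpha\pi$ in $\mathcal{AH}_2$) with $\mathcal{M}$, obtaining an invertible $\mathcal{AH}_2-\mathcal{AH}_3$ bimodule category. Using the $AH_2-AH_4$, $AH_4-AH_3$ and $AH_2-AH_3$ fusion-bimodule lists together with the multiplicative compatibility tables, I would check that the fusion bimodule realized by this composite is forced to be $6_{23}$, ideally because the relevant product is a unique multiplication. The main obstacle is the fusion-ring identification of the previous paragraph: it is exactly the point where one must confirm that gauging $\mathcal{AH}_4$ by this $\mathbb{Z}_2$ lands on $\mathcal{AH}_3$ rather than on another category in the Morita class, and once that is settled, singling out $6_{23}$ and hence deducing the existence of $AH+1$ is routine bookkeeping.
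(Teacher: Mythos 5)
Your proposal fails at precisely the step you identify as decisive: the dual category of $\mathcal{AH}_4$ with respect to $\mathrm{Mod}(1+\alpha_2)$ is \emph{not} $\mathcal{AH}_3$, and this can be read off from the fusion rules quoted in the paper. Since $-2=2$ in $\mathbb{Z}_4$, the relation $\alpha_i\xi=\xi\alpha_{-i}$ forces $\alpha_2$ to commute, at the level of fusion rules, with every simple object of $\mathcal{AH}_4$. Writing $A=1+\alpha_2$, every simple $A$-$A$ bimodule is a summand of $A\otimes X\otimes A$ for some simple $X$, and centrality of $\alpha_2$ gives $A\otimes X\otimes A\cong 2X\oplus 2(\alpha_2 X)$, whose bimodule endomorphism algebra is $2$-dimensional by Frobenius reciprocity. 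Hence each of the four orbits $\{1,\alpha_2\}$, $\{\alpha_1,\alpha_3\}$, $\{\xi,\alpha_2\xi\}$, $\{\alpha_1\xi,\alpha_3\xi\}$ contributes exactly two simple bimodules, so the dual category has exactly $8$ simple objects, of Frobenius-Perron dimensions $1,1,1,1,d,d,d,d$ with $d=4+\sqrt{17}$ (check: $4+4d^2=136+32\sqrt{17}$, the global dimension of $\mathcal{AH}_4$). But $\mathcal{AH}_3$ is the principal even part of the ($3$-supertransitive, hence $2$-supertransitive) subfactor $AH+1$, so it contains a simple object $\rho$ with $\dim\rho=\frac{7+\sqrt{17}}{2}-1=\frac{5+\sqrt{17}}{2}$, a dimension that does not occur in the list above; in fact $AH_3$ has rank $9$, with four simples of dimension $\frac{5+\sqrt{17}}{2}$ and one of dimension $3+\sqrt{17}$. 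Since the Grothendieck ring determines the Frobenius-Perron dimensions, the ring of your dual category cannot be $AH_3$: gauging $\mathcal{AH}_4$ by $\{1,\alpha_2\}$ lands on one of the rank-$8$ categories with four invertible objects (the $\mathcal{AH}_4$/$\mathcal{AH}_5$/$\mathcal{AH}_6$-type categories of the references), and the remainder of your argument then has nothing to compose with.

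It is instructive to compare with the paper's proof, which also uses a two-dimensional pointed Q-system, but inside $\mathcal{AH}_2$ rather than $\mathcal{AH}_4$: from the list of Q-systems attached to the fusion module $16_2$ realized by $AH$, there is one containing both invertibles $1$ and $\alpha$, and since the invertible part of a Q-system is a sub-Q-system, $1+\alpha$ admits a Q-system structure in $\mathcal{AH}_2$. The structural reason this choice works while yours cannot is that $\alpha$ is \emph{not} central in $AH_2$: one has $\alpha\rho\neq\rho\alpha$, so the free $(1+\alpha)$-bimodule on $\rho$ is already simple, of dimension $3+\sqrt{17}$, while the $\alpha$-fixed simple object of dimension $5+\sqrt{17}$ splits into four simple bimodules of dimension $\frac{5+\sqrt{17}}{2}$; this orbit structure is exactly what produces a rank-$9$ dual with the dimension profile of $AH_3$. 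The paper then confirms by the dual fusion-ring computation (the candidates being the fusion modules $14_2$, $15_2$) that the dual ring is forced to be $AH_3$, and identifies $6_{23}$ as the unique $AH_2$-$AH_3$ fusion bimodule compatible with the Q-system $1+\alpha$. If you want to salvage your strategy, the Q-system must have its invertible part acting with non-central orbit structure, which in this Morita class pushes you back into $\mathcal{AH}_2$; no $\mathbb{Z}_2$-gauging of $\mathcal{AH}_4$ can reach $\mathcal{AH}_3$.
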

\begin{proof}
We consider the list of fusion modules over $AH_2$ from the text file \textit{AH2Modules} in \cite{GSbp}. The subfactor $AH$ is associated to the fusion module $16_2$. We see from looking at the list of Q-systems associated to $16_2$ that there is a Q-system containing both invertible objects $1$ and $\alpha$ in $\mathcal{AH}_2$. Therefore there is a Q-system structure on $1+\alpha$. The possible fusion modules corresponding to such a Q-system are $14_2$ and $15_2$. We compute the possible dual fusion rings for these two fusion modules using the methods of \cite{AHCat2} and find that the only possible dual ring is $AH_3$, which must therefore be the Grothendieck ring of the dual category $\mathcal{AH}_3 $ of $(1+\alpha)-(1+\alpha) $ bimodules in $\mathcal{AH}_2 $. Finally, we check the list of $AH_2-AH_3$ fusion bimodules, and the only one compatible with the Q-system $1+\alpha$ is $6_{23}$.
\end{proof}

\begin{lemma}
There is an $\mathcal{AH}_1-\mathcal{AH}_3 $ bimodule category realizing $6_{13} $.
\end{lemma}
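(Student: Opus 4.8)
The plan is to realize $6_{13}$ as a relative tensor product of two bimodule categories that are already known to be realized, and then to pin the result down using the multiplicative compatibility tables, exactly in the spirit of the preceding lemmas. The natural composite to take is the invertible $\mathcal{AH}_1-\mathcal{AH}_2$ bimodule category coming from $AH$ with the $\mathcal{AH}_2-\mathcal{AH}_3$ bimodule category realizing $6_{23}$ from the previous lemma; composing over $\mathcal{AH}_2$ lands us in the $\mathcal{AH}_1-\mathcal{AH}_3$ slot, which is exactly where $6_{13}$ lives.

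First I would identify the $\mathcal{AH}_1-\mathcal{AH}_2$ bimodule category coming from the subfactor $AH$. As already used in the proof that $8_{14}$ and $9_{14}$ are realized, $AH$ gives an invertible Morita equivalence between its dual even part $\mathcal{AH}_1$ and its principal even part $\mathcal{AH}_2$, associated to the Q-system $1+\rho$ in $\mathcal{AH}_2$. Reading off the list of $AH_1-AH_2$ fusion bimodules, I would locate the unique one compatible with $1+\rho$; call it $m_{12}$. Since $AH$ exists as a consequence of the existence of $2AH$, the fusion bimodule $m_{12}$ is realized. By the previous lemma, the $\mathcal{AH}_2-\mathcal{AH}_3$ bimodule category associated to $1+\alpha$ realizes $6_{23}$ and is likewise realized.

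I would then form the relative tensor product over $\mathcal{AH}_2$ of these two bimodule categories. As the product of two invertible bimodule categories it is again invertible, and since $(1+\rho,1+\alpha)=1$ in $\mathcal{AH}_2$ the associated composite Q-system is irreducible. Consulting the multiplicative compatibility list for $m_{12}\cdot 6_{23}$ among the $AH_1-AH_2$ with $AH_2-AH_3$ data (available in \textit{BimoduleCompatibility} from \cite{GSbp}, since the indices satisfy $1\le i,j,k\le 3$), the desired conclusion follows immediately if $m_{12}\cdot 6_{23}=6_{13}$ is a unique multiplication: a unique multiplication of two realized fusion bimodules is itself realized, by the same criterion invoked repeatedly above.

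The main obstacle is the possibility that $m_{12}\cdot 6_{23}$ is not a singleton, so that the compatibility data alone does not isolate $6_{13}$. In that case I would eliminate the remaining candidates by a dimension count: the composite bimodule category is associated to a Q-system of dimension $dim(1+\rho)\,dim(1+\alpha)=\frac{5+\sqrt{17}}{2}\cdot 2 = 5+\sqrt{17}$, which I expect to single out $6_{13}$ among the compatible fusion bimodules. Should even that leave ambiguity, I would fall back on elimination through further multiplicative constraints against the already-realized $\mathcal{AH}_1-\mathcal{AH}_1$ bimodules $10_{11}$, $12_{11}$, $13_{11}$, $14_{11}$, mirroring the exclusion of $8_{11}$ in the proof that $12_{11}$ is realized.
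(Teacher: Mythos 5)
Your opening move is the same as the paper's: the paper also starts from the $\mathcal{AH}_1$--$\mathcal{AH}_2$ bimodule category coming from $AH$ (it identifies the fusion bimodule explicitly: the odd part of the dual Asaeda--Haagerup subfactor realizes $9_{12}$, which is your $m_{12}$) and composes it over $\mathcal{AH}_2$ with the category realizing $6_{23}$. But your central premise then fails: according to the multiplicative compatibility data, $9_{12}\cdot 6_{23}$ produces $7_{13}$, not $6_{13}$. The composite is one specific bimodule category and it realizes one specific fusion bimodule, namely $7_{13}$; consequently none of your fallbacks can rescue the identification. The dimension count in particular is hopeless: $6_{13}$ and $7_{13}$ are related by a Morita autoequivalence of $\mathcal{AH}_1$, so they are compatible with Q-systems of the same dimensions, and no amount of narrowing down candidates for what the composite realizes can turn $7_{13}$ into $6_{13}$.

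The missing idea is a \emph{second} composition, not an elimination. The paper takes the category realizing $7_{13}$ obtained above and multiplies it by the nontrivial invertible $\mathcal{AH}_1$--$\mathcal{AH}_1$ bimodule category realizing $13_{11}$, whose existence was established in the earlier lemma (the one showing $10_{11}$, $12_{11}$, $13_{11}$, $14_{11}$ are realized); multiplicative compatibility of $13_{11}$ with $7_{13}$ then forces the product to realize $6_{13}$. You actually name $13_{11}$ in your last sentence, but you propose to use it as a constraint to cut down the candidate set for $m_{12}\cdot 6_{23}$ --- mirroring the exclusion of $8_{11}$ --- which is the wrong logical move here: you must tensor with it to move from $7_{13}$ to $6_{13}$, not test compatibility against it. With that one extra step inserted after your composite, your argument becomes the paper's proof.
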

\begin{proof}
The odd part of the (dual) Asaeda-Haagerup subfactor realizes the fusion bimodule $9_{12} $. Multiplicative compatibility rules for $9_{12} $ and $6_{23} $ show that there is a $\mathcal{AH}_1-\mathcal{AH}_3 $ bimodule category realizing $7_{13} $. Then multiplicative compatibility for $13_{11}$ with $7_{13}$ shows that there is a bimodule category realizing $6_{13} $.

\end{proof}

\begin{corollary}
The subfactor $AH+1$ exists.
\end{corollary}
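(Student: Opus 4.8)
The plan is to deduce this corollary from the immediately preceding lemma in exactly the way the existence of $AH+2$ was obtained from the realization of $12_{11}$. The $AH+1$ subfactor has dual even part $\mathcal{AH}_1$ and principal even part $\mathcal{AH}_3$, so its odd part is an invertible $\mathcal{AH}_1-\mathcal{AH}_3$ bimodule category, and at the level of fusion data this odd part realizes the fusion bimodule $6_{13}$; indeed the principal and dual graphs of $AH+1$ in Figure \ref{ahp1graphs} are precisely the combinatorial content recorded by $6_{13}$ together with the fusion rings $AH_1$ and $AH_3$. Thus constructing $AH+1$ amounts to producing an invertible $\mathcal{AH}_1-\mathcal{AH}_3$ bimodule category realizing $6_{13}$.

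Such a bimodule category is supplied by the preceding lemma, and the two fusion categories it connects are both available: $\mathcal{AH}_1$ is the dual even part of the already-constructed $AH$ subfactor, and $\mathcal{AH}_3$ was shown to exist as a fusion category with Grothendieck ring $AH_3$ in the earlier lemma. The bimodule category realizing $6_{13}$ is moreover invertible, being assembled as the composite $13_{11}\cdot 7_{13}$ of invertible bimodule categories, where $7_{13}$ is itself the composite of the invertible odd part $9_{12}$ of $AH$ with the invertible $6_{23}$. An invertible bimodule category between unitary fusion categories is a Morita equivalence, hence corresponds to a Q-system in $\mathcal{AH}_1$, and by the Q-system/subfactor dictionary recalled in Section 2 this Q-system yields a finite-index subfactor whose even parts and odd part reproduce the chosen Morita equivalence. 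Reading the graphs and the index $\frac{7+\sqrt{17}}{2}$ off the data of $6_{13}$ then identifies this subfactor as $AH+1$.

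I do not expect any genuine obstacle in the corollary itself: all of the substance lives in the chain of preceding lemmas, which establish the realizability of $6_{23}$, then $7_{13}$, and finally $6_{13}$ through multiplicative-compatibility arguments in the Brauer-Picard groupoid. The single point requiring a word of care is the passage from ``$6_{13}$ is realized'' to ``the realizing bimodule category is invertible and arises from a Q-system of the correct dimension''; but this is exactly the data carried by the invertibility of the composing bimodule categories and by the subfactor correspondence of Section 2, so the corollary then follows at once.
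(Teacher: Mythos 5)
Your proposal is correct and takes essentially the same route as the paper: the paper's entire proof is the single sentence that the $AH+1$ subfactor corresponds to a Q-system associated to a bimodule category realizing the fusion bimodule $6_{13}$, which the preceding lemma supplies. Your additional remarks on invertibility of the composite $13_{11}\cdot 7_{13}$ and on the Q-system/subfactor dictionary simply spell out what the paper leaves implicit.
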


\begin{proof}
The $AH+1$ subfactor corresponds to a Q-system associated to a bimodule category realizing the fusion bimodule $6_{13}$.
\end{proof}

\appendix

\section{The vertical gauge transformation between $\rho \alpha \kappa $ and $\alpha \rho \alpha \kappa $  for $AH+1$ } \label{bigint}

In this appendix we give the details for the vertical gauge transformation between $\rho \alpha \kappa $ and $\alpha \rho \alpha \kappa $  for $AH+1$. We denote composite edges in the $4$-graphs of $\rho \alpha \kappa $ and $\alpha \rho \alpha \kappa $ by words in the appropriate vertices. Thus for example
$\tilde{c}f_1f6$ denotes the (right) vertical edge in $\rho \alpha \kappa $ which is composed
of the edge $\tilde{c}f_1$ in the $4$ graph of $\rho $ (with the subscript distinguishing among the two edges connecting $\tilde{c} $ and $f$), followed by the edge $ff$ in the $4 $-graph of $\alpha $, followed by the edge $f6$ in the $4$-graph of $\kappa $. Then the vertical gauge transformation data consists of a square matrix  for each pair of initial and terminal vertices of edges in the $4$-graphs of $\rho \alpha \kappa $ and $\alpha \rho \alpha \kappa $, with columns indexed by edges in $\rho \alpha \kappa $ and rows indexed by edges in $\alpha \rho \alpha \kappa $. For $AH+1 $, there are $25$ $1\times 1$ matrices, $14$ $2\times 2$ matrices, $10$ $3\times 3$ matrices, $3$ $4\times 4$ matrices, and a $5\times 5$ matrix.

We now list the data, which was found using similar methods to the calculations in \cite{MR1686551}. The fact that this is indeed a vertical gauge transformation is verified in the accompanying Mathematica notebook.

The following $1\times 1$ gauge matrices have entries with the value $1$: 
$$\tilde{a}-6,\tilde{a}-4,\tilde{b}-\tilde{A},\tilde{e}-\tilde{A}, \tilde{e}-A,a-5,c-3,\tilde{c}-2,\tilde{*}-F,b-C  .$$

The following $1\times 1$ gauge matrices have entries with the value $-1$: 
$$\tilde{a}-5,a-4,a-6,\tilde{b}-C,*-F, e-\tilde{A}, \tilde{c}-3,c-2,$$ $$b-\tilde{A}, f-1,e-A ,g-A,g-\tilde{C},\tilde{g}-A,\tilde{g}-\tilde{C} .$$

The larger matrices are as follows:

\renewcommand{\arraystretch}{1.5} 
\resizebox{\linewidth}{!}{%
$
\begin{blockarray}{ccc}
ca\tilde{a}6 & cff6 \\
\begin{block}{(cc)c}
 -\frac{1}{4} \sqrt{\frac{1}{2} \left(23-\sqrt{17}\right)} & \frac{1}{8}
   \left(-1-\sqrt{17}\right) &c\tilde{c}f_1f6\\
 \frac{1}{8} \left(1+\sqrt{17}\right) & -\frac{1}{4} \sqrt{\frac{1}{2}
   \left(23-\sqrt{17}\right)} &c\tilde{c}f_1f6\\
    \end{block}
\end{blockarray},
\quad
\begin{blockarray}{ccc}
\tilde{c}f_1f6 & \tilde{c}f_2f6  \\
\begin{block}{(cc)c}
\frac{1}{4} \sqrt{\frac{1}{2} \left(23-\sqrt{17}\right)} & \frac{1}{8}
   \left(-1-\sqrt{17}\right) & \tilde{c} ca\tilde{a}6\\
 \frac{1}{8} \left(1+\sqrt{17}\right) & \frac{1}{4} \sqrt{\frac{1}{2}
   \left(23-\sqrt{17}\right)}&\tilde{c}cff6 \\
    \end{block}
\end{blockarray}
$
}

\renewcommand{\arraystretch}{1.5} 
\resizebox{\linewidth}{!}{%
$
\begin{blockarray}{ccc}
g\tilde{e}eC & g\tilde{b}bC \\
\begin{block}{(cc)c}
 -\frac{1}{4} \sqrt{7-\sqrt{17}} & \frac{\sqrt{9+\sqrt{17}}}{4} & g\tilde{g}ddC \\
 \frac{\sqrt{9+\sqrt{17}}}{4} & \frac{\sqrt{7-\sqrt{17}}}{4} & g\tilde{g}\tilde{e}eC \\
    \end{block}
\end{blockarray},
\quad
\begin{blockarray}{ccc}
\tilde{g}ddC & \tilde{g}\tilde{e}eC\\
\begin{block}{(cc)c}
 -\frac{1}{4} \sqrt{7-\sqrt{17}} & \frac{\sqrt{9+\sqrt{17}}}{4} & \tilde{g} g\tilde{e}eC\\
 \frac{\sqrt{9+\sqrt{17}}}{4} & \frac{\sqrt{7-\sqrt{17}}}{4} & \tilde{g}g\tilde{b}bC \\
    \end{block}
\end{blockarray}
$
}

\[
\begin{blockarray}{ccc}
f\tilde{c}_1c3 & f\tilde{c}_2c3 \\
\begin{block}{(cc)c}
 \frac{1}{8} \left(1+\sqrt{17}\right) & \frac{1}{4} \sqrt{\frac{1}{2}
   \left(23-\sqrt{17}\right)} & ff\tilde{c}_1c3  \\
 \frac{1}{4} \sqrt{\frac{1}{2} \left(23-\sqrt{17}\right)} & \frac{1}{8}
   \left(-1-\sqrt{17}\right)& ff\tilde{c}_2c3 \\
    \end{block}
\end{blockarray}
\]

\renewcommand{\arraystretch}{1.5} 
\resizebox{\linewidth}{!}{%
$
\begin{blockarray}{ccc}
bb\tilde{b}F & be\tilde{e}F \\
\begin{block}{(cc)c}
-\frac{1}{\sqrt{2} }&-\frac{1}{\sqrt{2} }&b\tilde{b}e\tilde{e}F\\
 \frac{1}{\sqrt{2} } &-\frac{1}{\sqrt{2} }& b\tilde{b}\tilde{e}eF\\ 
    \end{block}
\end{blockarray},
\quad
\begin{blockarray}{ccc}
\tilde{b}e\tilde{e}F &\tilde{b}\tilde{e}eF \\
\begin{block}{(cc)c}
\frac{1}{\sqrt{2} }&-\frac{1}{\sqrt{2} }& \tilde{b}bb\tilde{b}F\\
 \frac{1}{\sqrt{2} } &\frac{1}{\sqrt{2} }& \tilde{b}be\tilde{e}F \\ 
    \end{block}
\end{blockarray},
\quad
\begin{blockarray}{ccc}
ge\tilde{e}F & g\tilde{e}gF \\
\begin{block}{(cc)c}
-\frac{1}{\sqrt{2} }&-\frac{1}{\sqrt{2} }& g\tilde{g}\tilde{e}eF \\
 \frac{1}{\sqrt{2} } &-\frac{1}{\sqrt{2} }& g\tilde{g}\tilde{g}gF \\ 
    \end{block}
\end{blockarray},
\quad
\begin{blockarray}{ccc}
\tilde{g}\tilde{e}eF & \tilde{g}\tilde{g}gF  \\
\begin{block}{(cc)c}
-\frac{1}{\sqrt{2} }&\frac{1}{\sqrt{2} }& \tilde{g}ge\tilde{e}F\\
 -\frac{1}{\sqrt{2} } &-\frac{1}{\sqrt{2} }& \tilde{g}g\tilde{e}eF \\ 
    \end{block}
\end{blockarray},$
}

\renewcommand{\arraystretch}{1.5} 
\resizebox{\linewidth}{!}{%
$
\begin{blockarray}{ccc}
ff_1f6 & ff_2f6 \\
\begin{block}{(cc)c}
0&-1 & fff_1f6\\
 -1 & 0 & fff_2f6 \\ 
    \end{block}
\end{blockarray},
\quad
\begin{blockarray}{ccc}
dddC & d\tilde{e}eC \\
\begin{block}{(cc)c}
0&-1 & ddddC\\
 1 & 0 & dd\tilde{e}eC \\ 
    \end{block}
\end{blockarray},
\quad
\begin{blockarray}{ccc}
ddd\tilde{C} &de\tilde{e}\tilde{C} \\
\begin{block}{(cc)c}
0& 1 & dddd\tilde{C}\\
 -1 & 0 & dde\tilde{e}\tilde{C} \\     \end{block}
\end{blockarray}
$
}

\renewcommand{\arraystretch}{1.5} 
\resizebox{\linewidth}{!}{%
$
\begin{blockarray}{ccc}
bdd\tilde{C}&be\tilde{e}\tilde{C} \\
\begin{block}{(cc)c}
 -\frac{1}{4} \sqrt{7-\sqrt{17}} & \frac{\sqrt{9+\sqrt{17}}}{4} & b\tilde{b}e\tilde{e}\tilde{C}\\
 \frac{\sqrt{9+\sqrt{17}}}{4} & \frac{\sqrt{7-\sqrt{17}}}{4} & b\tilde{b}g\tilde{g}\tilde{C} \\    \end{block}
\end{blockarray},
\quad
\begin{blockarray}{ccc}
\tilde{b}e\tilde{e}\tilde{C} &\tilde{b}g\tilde{g}\tilde{C} \\
\begin{block}{(cc)c}
 \frac{\sqrt{7-\sqrt{17}}}{4} & -\frac{1}{4} \sqrt{9+\sqrt{17}} & \tilde{b}bdd\tilde{C}\\
 -\frac{1}{4} \sqrt{9+\sqrt{17}} & -\frac{1}{4} \sqrt{7-\sqrt{17}} & \tilde{b}be\tilde{e}\tilde{C}\\
    \end{block}
\end{blockarray}
$
}

\[
\begin{blockarray}{cccc}
eb\tilde{b}F &ee\tilde{e}F &e\tilde{e}eF \\
\begin{block}{(ccc)c}
 \frac{1}{2} \sqrt{-3+\sqrt{17}} & \frac{1}{2} \sqrt{\frac{1}{2}
   \left(5-\sqrt{17}\right)} & \frac{1}{4} \left(-1+\sqrt{17}\right) & e\tilde{e}e\tilde{e}F\\
 -\frac{1}{2} \sqrt{\frac{1}{2} \left(1+\sqrt{17}\right)} & \frac{1}{2} & \frac{1}{2}
   \sqrt{\frac{1}{2} \left(5-\sqrt{17}\right)}  & e\tilde{e}\tilde{e}eF\\
 \frac{1}{4} \left(-3+\sqrt{17}\right) & \frac{1}{2} \sqrt{\frac{1}{2}
   \left(1+\sqrt{17}\right)} & -\frac{1}{2} \sqrt{-3+\sqrt{17}}& e\tilde{e}\tilde{g}gF \\
    \end{block}
\end{blockarray}
\]

\[
\begin{blockarray}{cccc}
\tilde{e}e\tilde{e}F &\tilde{e}\tilde{e}eF &\tilde{e}\tilde{g}gF \\
\begin{block}{(ccc)c}
-\frac{1}{2} \sqrt{-3+\sqrt{17}} & \frac{1}{2} \sqrt{\frac{1}{2}
   \left(1+\sqrt{17}\right)} & \frac{1}{4} \left(3-\sqrt{17}\right) & \tilde{e}eb\tilde{b}F \\
 -\frac{1}{2} \sqrt{\frac{1}{2} \left(5-\sqrt{17}\right)} & -\frac{1}{2} & -\frac{1}{2}
   \sqrt{\frac{1}{2} \left(1+\sqrt{17}\right)}  & \tilde{e}ee\tilde{e}F\\
 \frac{1}{4} \left(1-\sqrt{17}\right) & -\frac{1}{2} \sqrt{\frac{1}{2}
   \left(5-\sqrt{17}\right)} & \frac{1}{2} \sqrt{-3+\sqrt{17}} & \tilde{e}e\tilde{e}eF\\
    \end{block}
\end{blockarray}
\]

\[
\begin{blockarray}{cccc}
eddC &e\tilde{e}eC &e\tilde{b}bC \\
\begin{block}{(ccc)c}
-\sqrt{\frac{1}{2} \left(5-\sqrt{17}\right)} & -\sqrt{-\frac{5}{16}+\frac{3
   \sqrt{17}}{16}} & -\sqrt{-\frac{19}{16}+\frac{5 \sqrt{17}}{16}}  &  e\tilde{e}ddC\\
 0 & -\frac{1}{4} \sqrt{7-\sqrt{17}} & \frac{\sqrt{9+\sqrt{17}}}{4}&e\tilde{e}\tilde{e}eC  \\
 -\sqrt{\frac{1}{2} \left(-3+\sqrt{17}\right)} & \frac{1}{2} \sqrt{\frac{1}{2}
   \left(7-\sqrt{17}\right)} & \frac{1}{4} \left(-3+\sqrt{17}\right)&e\tilde{e} \tilde{b}bC  \\
    \end{block}
\end{blockarray}
\]

\[
\begin{blockarray}{cccc}
\tilde{e}ddC &\tilde{e}\tilde{e}eC & \tilde{e}\tilde{b}bC \\
\begin{block}{(ccc)c}
 \sqrt{\frac{1}{2} \left(5-\sqrt{17}\right)} & 0 & \sqrt{\frac{1}{2}
   \left(-3+\sqrt{17}\right)}  & \tilde{e}eddC \\
 \sqrt{-\frac{5}{16}+\frac{3 \sqrt{17}}{16}} & \frac{\sqrt{7-\sqrt{17}}}{4} &
   -\frac{1}{2} \sqrt{\frac{1}{2} \left(7-\sqrt{17}\right)} &\tilde{e}e\tilde{e}eC \\
 \sqrt{-\frac{19}{16}+\frac{5 \sqrt{17}}{16}} & -\frac{1}{4} \sqrt{9+\sqrt{17}} &
   \frac{1}{4} \left(3-\sqrt{17}\right) & \tilde{e}e\tilde{b}bC\\
    \end{block}
\end{blockarray}
\]

\[
\begin{blockarray}{cccc}
cc_1\tilde{c}5 &cc_2\tilde{c}5 & cff5 \\
\begin{block}{(ccc)c}
 \frac{1}{2} \left(-3+\sqrt{17}\right) & \frac{1}{2} \sqrt{-3+\sqrt{17}} &
   -\sqrt{-\frac{19}{4}+\frac{5 \sqrt{17}}{4}} & c\tilde{c}c\tilde{c}5 \\
 -\sqrt{-\frac{19}{4}+\frac{5 \sqrt{17}}{4}} & \frac{1}{4} \sqrt{\frac{1}{2}
   \left(23-\sqrt{17}\right)} & \frac{1}{8} \left(13-3 \sqrt{17}\right) & c\tilde{c}f_1f5\\
 \frac{1}{2} \sqrt{-3+\sqrt{17}} & \frac{1}{8} \left(7-\sqrt{17}\right) & \frac{1}{4}
   \sqrt{\frac{1}{2} \left(23-\sqrt{17}\right)} & c\tilde{c}f_2f5\\
    \end{block}
\end{blockarray}
\]

\[
\begin{blockarray}{cccc}
\tilde{c}c\tilde{c}5 & \tilde{c}f_1f5 &  \tilde{c}f_2f5 \\
\begin{block}{(ccc)c}
 \frac{1}{2} \left(3-\sqrt{17}\right) & \sqrt{-\frac{19}{4}+\frac{5 \sqrt{17}}{4}} &
   -\frac{1}{2} \sqrt{-3+\sqrt{17}} & \tilde{c}cc_1\tilde{c}5\\
 -\frac{1}{2} \sqrt{-3+\sqrt{17}} & -\frac{1}{4} \sqrt{\frac{1}{2}
   \left(23-\sqrt{17}\right)} & \frac{1}{8} \left(-7+\sqrt{17}\right) & \tilde{c}cc_1\tilde{c}5\\
 \sqrt{-\frac{19}{4}+\frac{5 \sqrt{17}}{4}} & \frac{1}{8} \left(-13+3 \sqrt{17}\right)
   & -\frac{1}{4} \sqrt{\frac{1}{2} \left(23-\sqrt{17}\right)} & \tilde{c}cff5\\  
    \end{block}
\end{blockarray}
\]

\[
\begin{blockarray}{cccc}
fc\tilde{c}5 & ff_1f5 &  ff_2f5 \\
\begin{block}{(ccc)c}
 \frac{1}{2} \left(5-\sqrt{17}\right) & 0 & \sqrt{-\frac{19}{2}+\frac{5 \sqrt{17}}{2}}
 &ffc\tilde{c}5  \\
 0 & 1 & 0 &fff_1f5\\
 \sqrt{-\frac{19}{2}+\frac{5 \sqrt{17}}{2}} & 0 & \frac{1}{2} \left(-5+\sqrt{17}\right)
   &fff_2f5 \\
   \end{block}
\end{blockarray}
\]

\[
\begin{blockarray}{cccc}
edd\tilde{C} & ee\tilde{e}d\tilde{C} &  eg\tilde{g}\tilde{C}  \\
\begin{block}{(ccc)c}
\sqrt{\frac{1}{2} \left(5-\sqrt{17}\right)} & 0 & \sqrt{\frac{1}{2}
   \left(-3+\sqrt{17}\right)}& e\tilde{e}dd\tilde{C} \\
 -\sqrt{-\frac{5}{16}+\frac{3 \sqrt{17}}{16}} & -\frac{1}{4} \sqrt{7-\sqrt{17}} &
   \frac{1}{2} \sqrt{\frac{1}{2} \left(7-\sqrt{17}\right)} & e\tilde{e}e\tilde{e}\tilde{C}  \\
 -\sqrt{-\frac{19}{16}+\frac{5 \sqrt{17}}{16}} & \frac{\sqrt{9+\sqrt{17}}}{4} &
   \frac{1}{4} \left(-3+\sqrt{17}\right) & e\tilde{e}g\tilde{g}\tilde{C}\\
   \end{block}
\end{blockarray}
\]

\[
\begin{blockarray}{cccc}
f\tilde{c}_1c2 & f\tilde{c}_2c2 &  f\tilde{a}a2  \\
\begin{block}{(ccc)c}
\frac{1}{4} \left(-3+\sqrt{17}\right) & \frac{1}{2} \sqrt{\frac{1}{2}
   \left(7-\sqrt{17}\right)} & -\sqrt{\frac{1}{2} \left(-3+\sqrt{17}\right)} & ff\tilde{c}_1c2\\
 \frac{1}{2} \sqrt{\frac{1}{2} \left(7-\sqrt{17}\right)} & \frac{1}{2} & \frac{1}{2}
   \sqrt{\frac{1}{2} \left(-1+\sqrt{17}\right)}& ff\tilde{c}_2c2  \\
 -\sqrt{\frac{1}{2} \left(-3+\sqrt{17}\right)} & \frac{1}{2} \sqrt{\frac{1}{2}
   \left(-1+\sqrt{17}\right)} & \frac{1}{4} \left(5-\sqrt{17}\right) & ff\tilde{a}a2\\
   \end{block}
\end{blockarray}
\]

\[
\begin{blockarray}{cccc}
\tilde{e}dd\tilde{C} & \tilde{e}e\tilde{e}\tilde{C}&  \tilde{e}g\tilde{g}\tilde{C} \\
\begin{block}{(ccc)c}
 -\sqrt{\frac{1}{2} \left(5-\sqrt{17}\right)} & \sqrt{-\frac{5}{16}+\frac{3
   \sqrt{17}}{16}} & \sqrt{-\frac{19}{16}+\frac{5 \sqrt{17}}{16}} & \tilde{e}edd\tilde{C}\\
 0 & \frac{\sqrt{7-\sqrt{17}}}{4} & -\frac{1}{4} \sqrt{9+\sqrt{17}} & \tilde{e}ee\tilde{e}\tilde{C}\\
 -\sqrt{\frac{1}{2} \left(-3+\sqrt{17}\right)} & -\frac{1}{2} \sqrt{\frac{1}{2}
   \left(7-\sqrt{17}\right)} & \frac{1}{4} \left(3-\sqrt{17}\right)& \tilde{e}eg\tilde{g}\tilde{C} \\
   \end{block}
\end{blockarray}
\]

\renewcommand{\arraystretch}{1.5} 
\resizebox{\linewidth}{!}{%
$
\begin{blockarray}{ccccc}
db\tilde{b}F & de\tilde{e}F & d\tilde{e}eF & d\tilde{g}gF \\
\begin{block}{(cccc)c}
0 & \frac{1}{\sqrt{2}} & -\frac{1}{4} \sqrt{\frac{1}{2} \left(7-\sqrt{17}\right)} &
   \frac{1}{8} \left(-1-\sqrt{17}\right) & ddb\tilde{b}F \\
 -\frac{1}{\sqrt{2}} & 0 & \frac{1}{8} \left(-1-\sqrt{17}\right) & \frac{1}{4}
   \sqrt{\frac{1}{2} \left(7-\sqrt{17}\right)} & dde\tilde{e}F \\
 \frac{1}{4} \sqrt{\frac{1}{2} \left(7-\sqrt{17}\right)} & \frac{1}{8}
   \left(1+\sqrt{17}\right) & 0 & \frac{1}{\sqrt{2}} & dd\tilde{e}eF \\
 \frac{1}{8} \left(1+\sqrt{17}\right) & -\frac{1}{4} \sqrt{\frac{1}{2}
   \left(7-\sqrt{17}\right)} & -\frac{1}{\sqrt{2}} & 0 & dd\tilde{g}gF\\
   \end{block}
\end{blockarray}
$
}

\renewcommand{\arraystretch}{1.5} 
\resizebox{\linewidth}{!}{%
$
\begin{blockarray}{ccccc}
cc_1\tilde{c}4 &cc_1\tilde{c}4 &c \tilde{c}c4 &  cff4 \\
\begin{block}{(cccc)c}
 \frac{1}{8} \left(1-\sqrt{17}\right) & -\sqrt{\frac{5}{128}+\frac{3 \sqrt{17}}{128}} &
   \frac{1}{16} \left(9-\sqrt{17}\right) & \sqrt{\frac{19}{64}+\frac{5 \sqrt{17}}{64}}
  & c\tilde{c}c\tilde{c}4 \\
 \frac{1}{8} \left(9-\sqrt{17}\right) & \sqrt{-\frac{3}{128}+\frac{11 \sqrt{17}}{128}}
   & \frac{1}{16} \left(-11+3 \sqrt{17}\right) & \sqrt{-\frac{101}{64}+\frac{29
   \sqrt{17}}{64}} & c\tilde{c}\tilde{c}c4\\
 -\frac{1}{4} \sqrt{-1+\sqrt{17}} & \frac{3 \sqrt{5-\sqrt{17}}}{8} &
   -\sqrt{\frac{19}{64}+\frac{5 \sqrt{17}}{64}} & \frac{1}{4}  & c\tilde{c}f_1f4 \\
 -\frac{1}{2} \sqrt{-3+\sqrt{17}} & \frac{1}{8} \left(1+\sqrt{17}\right) & \frac{1}{2}
   \sqrt{-3+\sqrt{17}} & -\frac{1}{4} \sqrt{\frac{1}{2} \left(71-17 \sqrt{17}\right)}
   & c\tilde{c}f_2f4 \\
   \end{block}
\end{blockarray}
$
}

\renewcommand{\arraystretch}{1.5} 
\resizebox{\linewidth}{!}{%
$
\begin{blockarray}{ccccc}
\tilde{c}c\tilde{c}4 & \tilde{c}\tilde{c}c4 & \tilde{c}f_1f4 &  \tilde{c}f_1f4 \\
\begin{block}{(cccc)c}
 \frac{1}{8} \left(-1+\sqrt{17}\right) & \frac{1}{8} \left(-9+\sqrt{17}\right) &
   \frac{1}{4} \sqrt{-1+\sqrt{17}} & \frac{1}{2} \sqrt{-3+\sqrt{17}} & \tilde{c} cc_1\tilde{c}4\\
 \sqrt{\frac{5}{128}+\frac{3 \sqrt{17}}{128}} & -\sqrt{-\frac{3}{128}+\frac{11
   \sqrt{17}}{128}} & -\frac{3}{8} \sqrt{5-\sqrt{17}} & \frac{1}{8}
   \left(-1-\sqrt{17}\right) &  \tilde{c} cc_2\tilde{c}4 \\
 \frac{1}{16} \left(-9+\sqrt{17}\right) & \frac{1}{16} \left(11-3 \sqrt{17}\right) &
   \sqrt{\frac{19}{64}+\frac{5 \sqrt{17}}{64}} & -\frac{1}{2} \sqrt{-3+\sqrt{17}} & \tilde{c} c\tilde{c}c4\\
 -\sqrt{\frac{19}{64}+\frac{5 \sqrt{17}}{64}} & -\sqrt{-\frac{101}{64}+\frac{29
   \sqrt{17}}{64}} & -\frac{1}{4} & \frac{1}{4} \sqrt{\frac{1}{2} \left(71-17
   \sqrt{17}\right)} &  \tilde{c} c ff 4 \\
   \end{block}
\end{blockarray}
$
}

\renewcommand{\arraystretch}{1.5} 
\resizebox{\linewidth}{!}{%
$
\begin{blockarray}{cccccc}
fc\tilde{c}4& f\tilde{c}_1c4 & f\tilde{c}2c4 & ff_1f4 & ff_2f4  \\
\begin{block}{(ccccc)c}
 \frac{1}{16} \left(-9+\sqrt{17}\right) & \frac{1}{16} \left(-7-\sqrt{17}\right) &
   \frac{\sqrt{5-\sqrt{17}}}{2} & \sqrt{-\frac{101}{128}+\frac{29 \sqrt{17}}{128}} &
   \sqrt{-\frac{13}{128}+\frac{5 \sqrt{17}}{128}}  & ffc \tilde{c}4 \\
 \frac{1}{16} \left(-7-\sqrt{17}\right) & \frac{1}{16} \left(-9+\sqrt{17}\right) &
   -\frac{1}{2} \sqrt{5-\sqrt{17}} & -\sqrt{-\frac{101}{128}+\frac{29 \sqrt{17}}{128}}
   & -\sqrt{-\frac{13}{128}+\frac{5 \sqrt{17}}{128}} & ff\tilde{c}_1c4\\
 \frac{\sqrt{5-\sqrt{17}}}{2} & -\frac{1}{2} \sqrt{5-\sqrt{17}} & 0 & 0 &
   -\sqrt{\frac{1}{2} \left(-3+\sqrt{17}\right)} &  ff\tilde{c}_2c4 \\
 \sqrt{-\frac{101}{128}+\frac{29 \sqrt{17}}{128}} & -\sqrt{-\frac{101}{128}+\frac{29
   \sqrt{17}}{128}} & 0 & \frac{1}{16} \left(-7-\sqrt{17}\right) & \frac{1}{16}
   \left(-13+5 \sqrt{17}\right) & fff_1f4\\
 \sqrt{-\frac{13}{128}+\frac{5 \sqrt{17}}{128}} & -\sqrt{-\frac{13}{128}+\frac{5
   \sqrt{17}}{128}} & -\sqrt{\frac{1}{2} \left(-3+\sqrt{17}\right)} & \frac{1}{16}
   \left(-13+5 \sqrt{17}\right) & \frac{1}{16} \left(9-\sqrt{17}\right) & fff_2f4 \\
   \end{block}
\end{blockarray}
 $
}

\newcommand{\urlprefix}{}
\bibliographystyle{alpha}
\bibliography{bibliography}

\end{document}